\edef\savecatcodeat{\the\catcode`@}
\def\@IFNEXTCHAR#1#2#3{\let\@tempe #1\def\@tempa{#2}\def\@tempb{#3}\futurelet
    \@tempc\@IFNCH}
\def\@IFNCH{\ifx \@tempc \@sptoken \let\@tempd\@xifnch
      \else \ifx \@tempc \@tempe\let\@tempd\@tempa\else\let\@tempd\@tempb\fi
      \fi \@tempd}
\def\tb@ifSpecChars#1#2{#1}
\def\tb@ifNoSpecChars#1#2{#2}
\def\tableau{%
  \bgroup
%
  \@IFNEXTCHAR[{\tb@tableauC}{\tb@tableauC[]}}     
\def\tb@tableauC[#1]{\hbox\bgroup%
    \let\\=\cr
    \def\bl{\global\let\tbcellF\tb@cellNF}%
    \def\tf{\global\let\tbcellF\tb@cellH}
%
    \dimen2=\ht\strutbox \advance\dimen2 by\dp\strutbox%
    \ifx\baselinestretch\undefined\relax%
    \else%
       \dimen0=100sp \dimen0=\baselinestretch\dimen0%
       \dimen2=100\dimen2 \divide\dimen2 by\dimen0%
    \fi%
    \let\tpos\tb@vcenter
    \tb@initYoung
    \tb@options#1\eoo
    \let\arrow\tb@arrow%
    \dimen0=\Tscale\dimen2%
    \dimen1=\dimen0 \advance\dimen1 by \tb@fframe%
    \lineskip=0pt\baselineskip=0pt
%
    \def\tb@nothing{}%
    \def\endcellno{$\rss\egroup\bss\egroup}
    \def\endcell{\endcellno\kern-\dimen0}
    \def\begincell{\vbox to\dimen0\bgroup\vss\hbox to\dimen0\bgroup\hss$}%
    \let\overlay\tb@overlay%
    \let\fl\tb@fl%
    \let\fr\tb@fr%
    \let\lss\hss\let\rss\hss\let\tss\vss\let\bss\vss
    \def\mkcell##1{
        \let\tbcellF\tb@cellD
        \def\tb@cellarg{##1}
        \ifx\tb@cellarg\tb@nothing\let\tb@cellarg\tb@cellE\fi%
%
	        \begincell\tb@cellarg\endcellno
	        \tbcellF
    }%
    \let\savecellF\tbcellF
    \tb@tableauD%
}%
\let\tb@savetableauD\tableauD
\gdef\tableauD#1{%
  \tpos{\tabskip=0pt\halign{&\mkcell{##}\cr#1\crcr}}%
  \global\let\tbcellF\savecellF
  \egroup
  \egroup}
\let\tb@tableauD\tableauD
\let\tableauD\tb@savetableauD
\let\tb@savetableauD\undefined
\def\tb@options#1{\ifx#1\eoo\relax\else\tb@option#1\expandafter\tb@options\fi}
\def\tb@option#1{%
  \if#1t\let\tpos\tb@vtop\fi
  \if#1c\let\tpos\tb@vcenter\fi
  \if#1b\let\tpos\vbox\fi
  \if#1F\tb@initFerrers\fi
  \if#1Y\tb@initYoung\fi
  \if#1E\tb@initEmpty\fi
  \if#1s\tb@initSmall\fi
  \if#1m\tb@initMedium\fi
  \if#1l\tb@initLarge\fi
  \if#1p\tb@initPartition\fi
  \if#1a\tb@initArrow\fi
}
\def\tb@vcenter#1{\ifmmode\vcenter{#1}\else$\vcenter{#1}$\fi}
\def\tb@vtop#1{\hbox{\raise\ht\strutbox\hbox{\lower\dimen0\vtop{#1}}}}
\def\tb@initPartition{\def\Tscale{.3}}
\def\tb@initSmall{\def\Tscale{1}}
\def\tb@initMedium{\def\Tscale{2}}
\def\tb@initLarge{\def\Tscale{3}}
\def\tb@initArrow{\dimen2=1.25em}
\def\tb@initYoung{%
  \def\tb@cellE{}
  \let\tb@cellD\tb@cellN
  \def\sk{\global\let\tbcellF\tb@cellNF}}
\def\tb@initFerrers{%
  \def\tb@cellE{\bullet}
  \let\tb@cellD\tb@cellNF
  \def\sk{\bullet}}
\def\tb@initEmpty{%
  \def\tb@cellE{}
  \let\tb@cellD\tb@cellNF
  \def\sk{\global\let\tbcellF\tb@cellNF}}
\def\tb@sframe#1{%
  \vbox to0pt{
    \vss
    \hbox to0pt{%
      \hss
      \vbox to\dimen1{
        \hrule depth #1 height0pt
        \vss
        \hbox to\dimen1{
          \vrule width #1 height\dimen1
          \hss
          \vrule width #1
          }%
        \vss
        \hrule height #1 depth 0in
        }%
      \kern-\tb@hframe
      }%
    \kern-\tb@hframe}}
\def\tb@hframe{.2pt}\def\tb@fframe{.4pt}\def\tb@bframe{2pt}
\def\tb@cellH{\tb@sframe{\tb@bframe}}       
\def\tb@cellNF{}                            
\def\tb@cellN{\tb@sframe{\tb@fframe}}       
\let\tbcellF\tb@cellN                       
\def\tb@Fsframe{%
  \vbox to0pt{
    \vss
    \hbox to0pt{%
      \hss
      \vbox to\dimen1{
        \fr@iftop{\hrule depth \fr@width height0pt}{\vskip \fr@width}
        \vss
        \hbox to\dimen1{
	  \fr@ifleft{\vrule width \fr@width height\dimen1}{\hskip \fr@width}
          \hss
          \fr@ifright{\vrule width \fr@width height\dimen1}{\hskip \fr@width}
          }%
        \vss
        \fr@ifbottom{\hrule height \fr@width depth 0in}{\vskip\fr@width}
        }%
      \kern-\tb@hframe
      }%
    \kern-\tb@hframe}}
\def\tb@fr{\@IFNEXTCHAR[{\tb@fra}{\global\let\tbcellF\tb@cellN}}
\def\tb@fra[#1]{%
	\global\let\fr@iftop\tb@IFNO
	\global\let\fr@ifbottom\tb@IFNO%
	\global\let\fr@ifleft\tb@IFNO%
	\global\let\fr@ifright\tb@IFNO%
	\global\let\fr@width\tb@fframe%
	\global\let\tbcellF\tb@Fsframe%
	\froptions#1\eoo
}
\def\froptions#1{\ifx#1\eoo\relax\else\froption#1\expandafter\froptions\fi}
\def\froption#1{
	\if#1t\global\let\fr@iftop\tb@IFYES\fi
	\if#1b\global\let\fr@ifbottom\tb@IFYES\fi
	\if#1l\global\let\fr@ifleft\tb@IFYES\fi
	\if#1r\global\let\fr@ifright\tb@IFYES\fi
	\if#1w\global\let\fr@width\tb@bframe\fi
}
\def\tb@IFYES#1#2{#1}
\def\tb@IFNO#1#2{#2}
\def\tb@rpad{1pt}
\def\tb@lpad{1pt}
\def\tb@tpad{1.8pt}
\def\tb@bpad{1.8pt}
\def\tb@overlay{\endcell\@IFNEXTCHAR[{\tb@overlaya}{\begincell}}
\def\tb@overlaya[#1]{\vbox to\dimen0\bgroup%
  \tb@overlayoptions#1\eoo%
  \tss\hbox to\dimen0\bgroup\lss$}
\def\tb@overlayoptions#1{\ifx#1\eoo\relax\else\tb@overlayoption#1\expandafter\tb@overlayoptions\fi}
\def\tb@overlayoption#1{
  \if#1t\def\tss{\vskip\tb@tpad}\let\bss\vss\fi
  \if#1c\let\tss\vss\let\bss\vss\fi
  \if#1b\def\bss{\vskip\tb@bpad}\let\tss\vss\fi
  \if#1l\def\lss{\hskip\tb@lpad}\let\rss\hss\fi
  \if#1m\let\lss\hss\let\rss\hss\fi
  \if#1r\def\rss{\hskip\tb@rpad}\let\lss\hss\fi
}
\def\tb@fl{\endcell\begincell\vrule depth 0pt width \dimen0 height \dimen0 \endcell\begincell}
\def\tbgobble#1{}
\def\Pscale{1}
\def\skewptn{%
  \@IFNEXTCHAR[{\tb@ptnC}{\tb@ptnC[]}}     
\def\tb@ptnC[#1](#2){%
	{%
    \let\Tscale\Pscale
    \let\\=\cr
   \def\tb@initYoung{%
	\def\tb@cell{\hskip\dimen0\tb@cellN}%
	\def\tb@kernA{\kern.5\dimen0}%
	\def\tb@kernB{\kern-.5\dimen0}%
   }%
   \def\tb@initFerrers{%
	\def\tb@cell{\hbox to\dimen0{\hss$\bullet$\hss}}%
	\def\tb@kernA{}%
	\def\tb@kernB{}%
   }%
%
    \dimen2=\ht\strutbox \advance\dimen2 by\dp\strutbox%
    \ifx\baselinestretch\undefined\relax%
    \else%
       \dimen0=100sp \dimen0=\baselinestretch\dimen0%
       \dimen2=100\dimen2 \divide\dimen2 by\dimen0%
    \fi%
    \let\tpos\tb@vcenter
    \tb@initYoung
    \tb@options#1\eoo
    \dimen0=\Tscale\dimen2%
    \dimen1=\dimen0 \advance\dimen1 by \tb@fframe%
    \lineskip=0pt\baselineskip=0pt
    \tpos{\skewptnDnewline#2|)}%
	}%
}%
\def\skewptnDnewline#1|{\vbox to\dimen0\bgroup\vss\tb@kernA\hbox\bgroup\skewptnEon#1,|}
\def\skewptnDendline|{\egroup\tb@kernB\vss\egroup\@IFNEXTCHAR{)}{\tbgobble}{\skewptnDnewline}}
\def\skewptnEon#1,{%
	\tb@rpN=#1%
	\ifnum#1>0
	        \loop%
		\tb@cell%
	        \ifnum\tb@rpN>1\advance\tb@rpN by-1%
        	\repeat%
	\fi%
	\@IFNEXTCHAR{|}{\skewptnDendline}{\skewptnEoff}}
\def\skewptnEoff#1,{\hskip #1\dimen0%
	\@IFNEXTCHAR{|}{\skewptnDendline}{\skewptnEon}}
\let\savecatcodeat\undefined
\def\Q{{\mathbb Q}}
\def\P{{\mathbb P}}
\def\RR{{\mathbb R}}
\def\C{{\mathbb C}}
\def\OO{{\mathcal O}}
\def\kr{{\mathbb C}}
\newcommand{\mto}[1]{\stackrel{#1}\longrightarrow}
\newcommand{\lmto}[1]{\stackrel{#1}\longleftarrow}
\def\cF{{\mathcal F}}
\def\cE{{\mathcal E}}
\def\cQ{{\mathcal Q}}
\def\cO{{\mathcal O}}
\def\cK{{\mathcal K}}
\def\ZZ{{\mathbb Z}}
\def\PP{{\mathbb P}}
\def\VV{{\mathbb V}}
\newcommand{\Sym}{\text{Sym}}
\def\te{\otimes}
\def\bfone{{\mathbf 1}}
\def\dt{\bullet}
\def\Ext{{\text{Ext}}}
\def\iso{\cong}
\newcommand{\vmto}[1]{\stackrel{#1}\longleftarrow}
\def\Hom{\text{Hom}}
\def\oval{{\underline{ \alpha}}}
\def\ovphi{{\underline{\phi}}}
\def\ovalt{{\underline{\alpha}}^{T}}
\newcommand{\arxiv}[1]{\href{http://arxiv.org/abs/#1}{{\tt arXiv:#1}}}
\newtheorem{theorem}{Theorem}[section]
\newtheorem{lemma}[theorem]{Lemma} 
\newtheorem{prop}[theorem]{Proposition} 
\newtheorem{definition}[theorem]{Definition}
\newtheorem{remark}[theorem]{Remark}
\let\olddefinition\remark
\renewcommand{\remark}{\olddefinition\normalfont}
\newtheorem{example}[theorem]{Example}  
\newtheorem{fact}[theorem]{Fact}
\newtheorem{question}[theorem]{Practical Question}
\newtheorem{empirical}[theorem]{Empirical Result}
 \newcommand\blfootnote[1]{%
  \begingroup
  \renewcommand\thefootnote{}\footnote{#1}%
  \addtocounter{footnote}{-1}%
  \endgroup
}
\numberwithin{equation}{section}                       
\begin{document}
\title{The Chow Form of the Essential Variety in Computer Vision}
\author{Gunnar Fl\o ystad, Joe Kileel, Giorgio Ottaviani}
\date{}

\maketitle
\begin{abstract} \noindent
The Chow form of the 
essential variety in computer
vision is calculated.   
Our 
derivation uses secant varieties, 
Ulrich sheaves and representation theory. 
Numerical experiments show
that our formula can detect
noisy point 
correspondences
between two images.
\end{abstract}

\blfootnote{\textit{2010 Mathematics Subject Classification.} 14M12, 14C05, 14Q15, 13D02, 13C14, 68T45.}
\blfootnote{\textit{Key words and phrases.} Chow form, Ulrich sheaf, Pieri resolutions, calibrated cameras, essential variety.} 

\section{Introduction}\label{sec:intro}
The {\it essential variety} ${\mathcal E}$ is the variety of $3\times 3$ real matrices with two equal singular values, and the third one equal to zero ($\sigma_1=\sigma_2$, $\sigma_3=0$).
It was introduced in the setting of computer vision; see \cite[\S 9.6]{HZ}. Its elements, the so-called {\it essential matrices}, have the form $TR$, where $T$
is real skew-symmetric and $R$ is real orthogonal. 
The essential variety is a cone of codimension $3$ and degree $10$ in the space of $3 \times 3$-matrices, defined by homogeneous cubic equations, that we recall in (\ref{eq:ess}).
The complex solutions of these cubic equations define the complexification 
${\mathcal E}_\C$ of the essential variety. While the real essential variety is smooth, its complexification has a singular locus that we describe precisely in \S \ref{sec:determinantal}.

The {\it Chow form} of a codimension $c$ projective variety $X\subset\P^n$
is the equation $\textup{Ch}(X)$ of the divisor in the Grassmannian $\textup{Gr}(\P^{c-1},\P^n)$
given by those linear subspaces of dimension $c-1$ which meet $X$.
It is a basic and classical tool that allows one to recover much geometric information about $X$; 
for its main properties we refer to \cite[\S 4]{GKZ}.
In \cite[\S 4]{ALST}, the problem of computing the Chow form of the essential variety 
was posed, while the analogous problem for the {\it fundamental variety} was solved, 
another important variety in computer vision. 

The main goal of this paper is to explicitly find the Chow form of the essential variety.
This provides an important tool for the problem of detecting if a set of image point correspondences 
$\{(x^{(i)},y^{(i)}) \in \RR^2\times\RR^2 \, | \, i=1,\ldots, m\}$ comes from $m$ world points
in $\RR^3$ and two calibrated cameras.  It furnishes an exact solution for $m=6$ and it behaves well 
given noisy input, as we will see in \S \ref{sec:Chow}.  Mathematically, we can consider the system of equations:
\begin{equation} \label{3d}
\begin{cases}
A\widetilde{X^{(i)}} \equiv \widetilde{x^{(i)}} \\
B\widetilde{X^{(i)}} \equiv \widetilde{\, y^{(i)}}. 
\end{cases}
\end{equation}

\noindent Here $\widetilde{x^{(i)}} = (x^{(i)}_{1} \colon x^{(i)}_{2} \colon 1)^{T} \in \P^2$
and $\widetilde{\, y^{(i)}} = (y^{(i)}_{1} \colon y^{(i)}_{2} \colon 1)^{T} \in \P^2$
are the given image points.
The unknowns are two $3 \times 4$ matrices $A, B$
with rotations in their left $3 \times 3$ blocks 
and $m = 6$ points $\widetilde{X^{(i)}} \in \P^{3}$. 
These represent calibrated cameras 
and world points, respectively.  
A calibrated camera has normalized image coordinates, as explained in \cite[\S 8.5]{HZ}.
Here $\equiv$ denotes equality up to nonzero scale.  
From our calculation of $\textup{Ch}(\mathcal{E}_{\C})$, we deduce:

\begin{theorem}\label{mainThm}
There exists an explicit $20 \times 20$ skew-symmetric matrix $\mathcal{M}(x,y)$ of degree $\le (6,6)$ polynomials 
over $\ZZ$ in the coordinates of $(x^{(i)}, y^{(i)})$ with the following properties. If 
\textup{(\ref{3d})} admits a complex solution then $\mathcal{M}(x^{(i)}, y^{(i)})$ is rank-deficient. Conversely,
the variety of point correspondences $(x^{(i)}, y^{(i)})$ such that $\mathcal{M}(x^{(i)}, y^{(i)})$ is rank-deficient
contains a dense subset for which \textup{(\ref{3d})} admits a complex solution.
\end{theorem}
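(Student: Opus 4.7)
The plan is to reduce (\ref{3d}) to an incidence condition on the Grassmannian and then to evaluate the Chow form of $\mathcal{E}_{\C}$. If $(A, B, \widetilde{X^{(i)}})$ is a solution to (\ref{3d}), the relative rotation and translation of the two calibrated cameras determine an essential matrix $E \in \mathcal{E}_{\C}$, and the standard epipolar identity forces $\widetilde{y^{(i)}}{}^{T} E \, \widetilde{x^{(i)}} = 0$ for $i = 1, \ldots, 6$. These six linear equations in the nine entries of $E$ cut out a generically $3$-dimensional subspace $L(x,y) \subset \C^{9}$, whose projectivization $\P(L(x,y))$ is a $\P^{2} \subset \P^{8}$ that must intersect $\mathcal{E}_{\C}$. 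Hence (\ref{3d}) admits a complex solution if and only if the Plücker point $[L(x,y)] \in \textup{Gr}(\P^{2},\P^{8})$ lies on the Chow divisor of $\mathcal{E}_{\C}$.

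The next step is to invoke the explicit Pfaffian representation $\textup{Pf}(\mathcal{M}_{0}) = \textup{Ch}(\mathcal{E}_{\C})$ constructed in the body of the paper from an Ulrich sheaf and Pieri resolutions. Since $\deg \mathcal{E}_{\C} = 10$, the Chow form is homogeneous of degree $10$ in the Plücker coordinates on $\textup{Gr}(\P^{2},\P^{8})$, and the Ulrich construction produces a $20 \times 20$ skew-symmetric matrix $\mathcal{M}_{0}$ with integer entries linear in those Plücker coordinates. For a skew-symmetric matrix of even size, rank-deficiency is equivalent to the vanishing of the Pfaffian.

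To build $\mathcal{M}(x,y)$, I substitute the Plücker coordinates of $L(x,y)$ into $\mathcal{M}_{0}$. These coordinates equal, up to sign, the $6 \times 6$ minors of the $6 \times 9$ matrix whose $i$-th row is the coefficient vector $\widetilde{x^{(i)}} \otimes \widetilde{y^{(i)}}$ of the $i$-th epipolar form. Each row is bilinear in a single correspondence, so each such minor is of bidegree $(1,1)$ in every one of the six pairs, hence of total bidegree $(6,6)$ in $(x, y)$. Since the entries of $\mathcal{M}_{0}$ are $\ZZ$-linear in the Plücker coordinates, the entries of $\mathcal{M}(x,y)$ are integer polynomials of bidegree $\le (6,6)$, and the forward implication of the theorem is immediate.

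The main obstacle is the converse density statement. Writing $V \subset (\P^{2} \times \P^{2})^{6}$ for the rank-deficient locus, the forward direction places the image of the reconstruction map $\pi : (A, B, \widetilde{X^{(i)}}) \mapsto (\widetilde{x^{(i)}}, \widetilde{y^{(i)}})$ inside $V$. A parameter count gives $\pi$ the expected image dimension $5 + 6 \cdot 3 = 23$, comprised of $5$ degrees of freedom of the essential matrix (relative pose modulo scale) and $18$ degrees of freedom for the six world points in $\P^{3}$, matching the codimension-$1$ hypersurface $V$ inside the $24$-dimensional correspondence space. I would close the converse by producing a partial inverse on a Zariski-dense open of $V$: for generic $(x,y) \in V$, pick $E$ from the finite set $\P(L(x,y)) \cap \mathcal{E}_{\C}$, decompose it as $E = TR$ with $T$ skew-symmetric and $R$ orthogonal (up to the standard finite ambiguity of the essential matrix factorization), assemble the calibrated cameras $A, B$, and triangulate to recover world points. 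Irreducibility of $\mathcal{E}_{\C}$ then propagates to irreducibility of the relevant component of $V$, whence $\pi$ must be dominant, yielding the claimed dense containment.
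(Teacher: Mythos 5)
Your proposal follows essentially the same route as the paper: form the $6\times 9$ epipolar coefficient matrix $Z$, take its $6\times 6$ minors as Pl\"ucker coordinates of the $\P^2 = \P(\ker Z)$, substitute them into the $20\times 20$ skew-symmetric Pfaffian representation of $\textup{Ch}(\mathcal{E}_\C)$, and invert via SVD-factorization of an intersection point $E \in \P(L(x,y)) \cap \mathcal{E}_\C$ plus triangulation for the converse. Two small cautions: your opening ``if and only if'' is stronger than the theorem asserts (and stronger than what your own converse paragraph later establishes, which is only density), and you should explicitly dispatch the degenerate case $\operatorname{rk} Z < 6$ --- there the Pl\"ucker point is undefined, but all maximal minors of $Z$ vanish so $\mathcal{M}(x,y)$ is the zero matrix and is trivially rank-deficient, which is how the paper closes the forward implication in that case.
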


\noindent In fact, we will produce two such matrices.  Both of them, along with related formulas we derive, 
are available in ancillary files accompanying the \texttt{arXiv} version of this paper, and we have posted them at
\url{http://math.berkeley.edu/~jkileel/ChowFormulas.html}.

Our construction of the Chow form
uses the technique of \textit{Ulrich sheaves} introduced by Eisenbud and Schreyer in \cite{ESW}.
We construct rank $2$ Ulrich sheaves on the essential variety $\mathcal{E}_{\C}$.
For an analogous construction of the Chow form of $K3$ surfaces,
see \cite{AFO}.

From the point of view of computer vision, this paper contributes a complete
 characterization for an `almost-minimal' problem. 
Here the motivation is {\it 3D reconstruction}.  Given multiple images 
of a world scene, taken by cameras in an unknown configuration, 
we want to estimate the camera configuration and a 3D model
of the world scene.  Algorithms for this are complex, and
successful.  See \cite{ASSSS} for a reconstruction from 150,000 images.

By contrast, the system (\ref{3d}) encodes a tiny reconstruction problem.
Suppose we are given six point correspondences in two calibrated
pictures (the right-hand sides in (\ref{3d})).
We wish to reconstruct both the two cameras and the six world points 
(the left-hand sides in (\ref{3d})).
If an exact solution exists then it is typically unique, modulo the natural
symmetries. However, an exact solution does not always exist. In order for this to happen, a giant
polynomial of degree 120 in the 24 variables on the right-hand sides has to vanish.
Theorem \ref{mainThm} gives an explicit matrix formula for that polynomial.

The link between minimal or almost-minimal
reconstructions and large-scale reconstructions 
is surprisingly strong.  Algorithms for the latter use the former reconstructions
repeatedly as core subroutines.   In particular, 
solving the system (\ref{3d}) given $m=5$ point pairs, instead of $m=6$, is a
subroutine in \cite{ASSSS}.  This solver is optimized in \cite{Nis}.
It is used to generate hypotheses inside \textit{random sampling consensus} 
(RANSAC) \cite{FB}
schemes for robust reconstruction from pairs of
calibrated images.  See \cite{HZ} for more vision background.

This paper is organized as follows.  In \S \ref{sec:determinantal}, we prove 
that ${\mathcal E}_\C$ is a hyperplane section of the variety $PX^s_{4,2}$ of $4\times 4$ symmetric matrices of rank $\le 2$. 
This implies a determinantal description of ${\mathcal E}_\C$; see Proposition \ref{prop:sm}.
A side result of the construction is that ${\mathcal E}_\C$ is the secant variety of its singular locus, which corresponds to pairs of isotropic vectors in $\C^3$.

In \S \ref{sec:Ulrich}, we construct two Ulrich sheaves on the variety of $4\times 4$ symmetric matrices of rank $\le 2$. 
One of the constructions we propose is new to our knowledge.
Both sheaves are GL(4)-equivariant, and they admit ``Pieri resolutions" in the sense of \cite{SW}.
We carefully analyze the resolutions using representation theory, and in particular show that
their middle differentials may be represented by symmetric matrices; 
see Propositions \ref{symM} and \ref{symN}.

In \S \ref{sec:Chow}, we combine the results of the previous sections and we construct the Chow form of the essential variety. 
The construction from \cite{ESW} starts with our rank $2$ Ulrich sheaves and allows to define two $20 \times 20$
matrices in the Pl\"ucker coordinates of $\textup{Gr}(\P^2,\P^8)$ each of which drops rank exactly
when the corresponding subspace $\P^2$ meets the essential variety $\mathcal{E}_{\C}$.
It requires some technical effort to put these matrices in skew-symmetric form,
and here our analysis from \S \ref{sec:Ulrich} pays off.  
We conclude the paper with numerical experiments demonstrating the robustness to noise 
that our matrix formulas in Theorem \ref{mainThm} enjoy. 

\bigskip

\noindent {\bf Acknowledgements.}\ The authors are grateful to
Bernd Sturmfels for his interest and encouragement.  They thank
Anton Fonarev pointing out the connection to Littlewood complexes 
in Remark \ref{rem:anton}.  J.K. and G.O. 
are grateful to Frank Schreyer for very useful conversations.  
G.F. and J.K. thank Steven Sam for help with \texttt{PieriMaps}
and for suggesting references to show that the middle maps $\phi$ are symmetric.
J.K. thanks Justin Chen for valuable 
comments.  G.O. is a member of GNSAGA-INDAM.

\section{The essential variety is a determinantal variety}\label{sec:determinantal}
\subsection{Intrinsic description}
Let ${\mathcal E}\subset{\mathbb R}^{3\times 3}$ be the essential variety defined by the conditions: 
$${\mathcal E}:=\{M\in {\mathbb R}^{3\times 3} \, | \, \sigma_1(M)=\sigma_2(M), \,\sigma_3(M)=0\}.$$ 

The polynomial equations of ${\mathcal E}$ are (see \cite[\S 4]{FM}) as follows:
\begin{equation}\label{eq:ess}
{\mathcal E}=\{M\in {\mathbb R}^{3\times 3}\,|\, \det(M)=0,\, 2(MM^{T})M- \text{tr}\left(MM^{T}\right)M=0\}.\end{equation}

These 10 cubics minimally generate the \textit{real radical ideal} \cite[p.85]{BCR}
of the essential variety $\mathcal{E}$, and that ideal is prime.
Indeed, the real radical property follows from our Proposition \ref{prop:singessential}(i)
and \cite[Theorem 12.6.1]{Mar}.
We denote by ${\mathcal E}_\C$ the projective variety in $\P_\C^8$ given by the complex solutions of  (\ref{eq:ess}).  
The essential variety ${\mathcal E}_\C$ has codimension $3$ and degree $10$.
In this section, we will prove that it is isomorphic to a hyperplane section of the variety $PX^{s}_{4,2}$ of complex symmetric $4\times 4$ matrices of rank $\le 2$. 
The first step towards this is Proposition \ref{prop:singessential} below,
and that relies on the group symmetries of $\mathcal{E}_{\C}$, which
we now explain.

Consider ${\RR}^{3}$ with the standard inner product $Q$, and the corresponding action of $\text{SO}(3,\RR)$ on ${\mathbb R}^{3}$. 
Complexify ${\RR}^{3}$ and consider ${\C}^{3}$ with the action of $\text{SO}(3,\C)$,
which has universal cover $\text{SL}(2,\C)$. It is technically simpler to work with the action of $\text{SL}(2,\C)$. Denoting by $U$ the irreducible $2$-dimensional representation of $\text{SL}(2,\C)$, we have the equivariant isomorphism $\C^3\cong S_{2}U$.  Writing $Q$ also for the complexification of the Euclidean product,
the projective space $\PP(S_{2}U)$ divides into two $\text{SL}(2,\C)$-orbits,
namely the isotropic quadric with equation $Q(u)=0$ and its complement.
Let $V$ be another complex vector space of dimension $2$.
The essential variety $\mathcal{E}_{\C}$ is embedded into the projective space of
$3 \times 3$-matrices $\PP(S_{2}U\otimes S_{2}V)$.
Since the singular value conditions defining $\mathcal{E}$ are 
$\text{SO}(3,\RR) \times \text{SO}(3,\RR)$-invariant, it follows that
$\mathcal{E}_{\C}$ is $\text{SL}(U) \times \text{SL}(V)$-invariant using \cite[Theorem 2.2]{DLOT}.

The following is a new geometric description of the essential variety.  
From the computer vision application, we start with the set of 
real points $\mathcal{E}$.  However, below we see that the surface
$\textup{Sing}(\mathcal{E}_{\C})$ inside $\mathcal{E}_{\C}$, which has
no real points, `determines' the algebraic geometry. Part (i) of Prop. \ref{prop:singessential}
is proved also in \cite[Prop. 5.9]{Maybank}.

\begin{prop}\label{prop:singessential}
(i) The singular locus of ${\mathcal E}_\C$ is the projective surface given by:
$$\textup{Sing}({\mathcal E}_\C)=\left\{a\cdot b^T\in \PP({\C}^{3\times 3}) \,| \, Q(a)=Q(b)=0\right\}.$$

(ii) The second secant variety of ${\textup{Sing}}({\mathcal E}_\C)$ equals ${\mathcal E}_\C$.
\end{prop}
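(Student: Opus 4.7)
My plan is to prove (ii) first by a direct algebraic calculation, and then to combine it with a general fact about secant varieties and the $\mathrm{SL}(U)\times\mathrm{SL}(V)$-equivariance of $\mathcal{E}_\C$ to handle part (i).

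For (ii), I take two generic points $ab^T$ and $cd^T$ on the proposed surface, so $Q(a)=Q(b)=Q(c)=Q(d)=0$, and consider $M=\lambda\,ab^T+\mu\,cd^T$. I would verify by direct expansion that $M$ satisfies the defining equations (\ref{eq:ess}). The determinant vanishes since $\mathrm{rank}(M)\le 2$. For the cubic, the four isotropy conditions kill the diagonal contributions to $MM^T$, leaving $MM^T=\lambda\mu\,(b\cdot d)(ac^T+ca^T)$ and $\text{tr}(MM^T)=2\lambda\mu\,(a\cdot c)(b\cdot d)$. A second use of $Q(a)=Q(c)=0$ collapses $MM^T M$ to $\lambda\mu\,(a\cdot c)(b\cdot d)\,M=\tfrac12\text{tr}(MM^T)\,M$, so the cubic relation holds. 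Thus the second secant variety of the proposed surface is contained in $\mathcal{E}_\C$. The surface itself is a smooth $\mathbb{P}^1\times\mathbb{P}^1$, the image of two smooth isotropic conics under the outer-product Segre map, so its second secant has expected dimension $2\cdot 2+1=5=\dim \mathcal{E}_\C$; I would confirm non-defectivity by a Terracini computation at one explicit pair of points. Irreducibility of $\mathcal{E}_\C$ then forces equality.

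For (i), the containment $\supseteq$ is essentially free from (ii). Setting $\mu=0$ in the computation above shows the proposed surface lies in $\mathcal{E}_\C$; that it lies in $\textup{Sing}(\mathcal{E}_\C)$ is a classical consequence of the fact that a smooth, non-degenerate variety $Y$ whose second secant is a proper, non-defective subvariety of projective space is contained in the singular locus of that secant variety. (At $y\in Y$, Terracini applied to the degenerate secant $y+y$ produces tangent contributions from every secant line through $y$, giving too large a tangent space.)

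The main obstacle is the reverse inclusion, that no further points of $\mathcal{E}_\C$ are singular. I would exploit the $\mathrm{SL}(U)\times\mathrm{SL}(V)$-invariance, which makes $\textup{Sing}(\mathcal{E}_\C)$ a union of orbits, so it suffices to classify the orbits on $\mathcal{E}_\C$ and test smoothness at a single representative per orbit. Using the two-orbit decomposition of $\mathbb{P}(S_2U)$ into the isotropic conic $\{Q=0\}$ and its complement, I would stratify the rank-$1$ points $ab^T\in\mathcal{E}_\C$ by the pair of isotropy conditions on $a$ and $b$ (the cubic relation forces at least one of $Q(a),Q(b)$ to vanish) and similarly stratify the rank-$2$ locus. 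At a carefully chosen representative of each orbit other than the proposed surface, I would compute the rank of the $10\times 9$ Jacobian of the equations in (\ref{eq:ess}) and verify that it equals $3$, the codimension. The verification at a generic rank-$2$ point is routine; the delicate step is confirming that the $3$-dimensional orbits of rank-$1$ matrices $ab^T$ with exactly one of $Q(a), Q(b)$ vanishing are smooth in $\mathcal{E}_\C$, which requires careful Jacobian bookkeeping (or computer algebra).
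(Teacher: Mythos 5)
Your proposal mirrors the paper's proof closely: the same direct verification that $\sigma_2(S)\subset\mathcal{E}_\C$ (using the isotropy conditions to collapse $MM^T$ and $MM^TM$ and match the cubic relation), the same classical secant-variety fact to get $S\subset\textup{Sing}(\mathcal{E}_\C)$, and the same orbit-by-orbit Jacobian strategy for the reverse inclusion, down to the observation that rank-one points of $\mathcal{E}_\C$ force at least one of $Q(a),Q(b)$ to vanish. The only small divergence is one of economy: the paper checks the Jacobian at a single representative of the $T_1\setminus S$ orbit and propagates smoothness to the remaining four-dimensional tangential-variety orbit by semicontinuity of Jacobian rank, whereas you would check each non-open orbit separately; either route closes the argument.
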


\begin{proof} Denote by $S$ the variety $\left\{a\cdot b^{T}\in \PP({\C}^{3\times 3})\,| \,Q(a)=Q(b)=0\right\}$, 
and let $\widehat{S}$ be the affine cone over it.
The line secant variety $\sigma_2(\widehat{S})$ consists of elements of the form
$M=a_1b_1^T+a_2b_2^T \in {\C}^{3\times 3}$ such that $Q(a_i)=a_i^Ta_i=Q(b_i)=b_i^Tb_i=0$ for $i=1,2$.  We compute that
$MM^T \, = \, a_1b_1^Tb_2a_2^T+a_2b_2^Tb_1a_1^T$ so that $\text{tr}(MM^T) \, = \, 2(b_1^Tb_2)(a_1^Ta_2)$.
Moreover $MM^TM \, = \, a_1b_1^Tb_2a_2^Ta_1b_1^T+a_2b_2^Tb_1a_1^Ta_2b_2^T \, = \, (b_1^Tb_2)(a_1^Ta_2)M$. 
Hence the equations (\ref{eq:ess}) of ${\mathcal E}_\C$ are satisfied by $M$. This proves that 
$\sigma_2(S)\subset {\mathcal E}_\C$.  Since
$\sigma_2(S)$ and ${\mathcal E}_\C$ are both of codimension $3$ and 
${\mathcal E}_\C$ is irreducible, the equality $\sigma_2(S) = {\mathcal E}_\C$ follows. It remains to prove (i). 
Denote by $[a_i]$  the line generated by
$a_i$. 
Every element $a_1b_1^T+a_2b_2^T$ with $[a_1]\neq[a_2]$, $[b_1]\neq[b_2]$ and $Q(a_i) = Q(b_i)=0$ for $i=1,2$ can be taken by $\textup{SL}(U)\times \textup{SL}(V)$
to a scalar multiple of any other element of the same form. 
This is the open orbit of the action of $\textup{SL}(U)\times \textup{SL}(V)$
on ${\mathcal E}_\C$.
The remaining orbits are the following:
\begin{enumerate}
\item{} the surface $S$, with set-theoretic equations $ MM^T = M^TM = 0$.
\item{} $T_1\setminus S$, where $T_1=\left\{a\cdot b^{T}\in \PP({\C}^{3\times 3})\,| \,Q(a)=0\right\}$ is a threefold, with set-theoretic equations $ M^TM = 0$.
\item{} $T_2\setminus S$, where $T_2=\left\{a\cdot b^{T}\in \PP({\C}^{3\times 3})\,| \,Q(b)=0\right\}$ is a threefold, with set-theoretic equations $ MM^T  = 0$.
\item{} $\textup{Tan}(S) \setminus(T_1\cup T_2)$, where the \textit{tangential variety} $\textup{Tan}(S)$
is the fourfold union of all tangent spaces to $S$, 
with set-theoretic equations $\textup{tr}(MM^T)=0, MM^TM=0$.
\end{enumerate}

One can compute explicitly that the Jacobian matrix of
${\mathcal E}_\C$ at $\begin{pmatrix}
1&0&0\\
\sqrt{-1}&0&0\\
0&0&0\end{pmatrix}\in T_1\setminus S$ has rank $3$.  The following 
code in \texttt{Macaulay2} \cite{GS} does that computation:

\begin{verbatim}
R = QQ[m_(1,1)..m_(3,3)]
M = transpose(genericMatrix(R,3,3))
I = ideal(det(M))+minors(1,2*M*transpose(M)*M - trace(M*transpose(M))*M)
Jac = transpose jacobian I
S = QQ[q]/(1+q^2)
specializedJac = (map(S,R,{1,0,0,q,0,0,0,0,0}))(Jac)
minors(3,specializedJac)
\end{verbatim}

Hence the points
in $T_1\setminus S$ are smooth points of ${\mathcal E}_\C$. 
By symmetry, also the points in $T_2\setminus S$ are smooth.
By semicontinuity, the points in $\textup{Tan}(S)\setminus(T_1\cup T_2)$ are smooth.
Since points in $S$ are singular for the secant variety $\sigma_2(S)$, this finishes the proof of (i).
\end{proof}

\begin{remark} From the study of tensor decomposition,
the parametric description in Proposition \textup{\ref{prop:singessential}} 
is identifiable.  That shows that real essential matrices have the form
$a^Tb+\overline{a}^T\overline{b}$ with $a, b\in\C^3$ and $Q(a)=Q(b)=0$.
This may be written in the alternative form
$(u^2)^Tv^2+(\overline{u}^2)^T\overline{v}^2\in S_{2}(U)\otimes S_{2}(V) $ with 
$u\in U$, $v\in V$.
This may help in computing real essential matrices.
Note that the four non-open orbits listed in the proof of
Proposition \ref{prop:singessential} are contained in the isotropic quadric
 $\textup{tr}(MM^T)=0$, hence they have no real points.
\end{remark}

\begin{remark}The surface $\textup{Sing}({\mathcal E}_\C)$ is more familiar with the embedding by $\OO(1,1)$, when it is the smooth quadric surface, doubly ruled by lines. In the embedding by $\OO(2,2)$, the two
rulings are given by conics.  These observations suggests expressing ${\mathcal E}_\C$ as a 
determinantal variety, as we do next in Proposition \textup{\ref{prop:eddegree}}.
Indeed, note that the smooth quadric surface embedded by $\OO(2,2)$ 
is isomorphic to a linear section of the second Veronese embedding of $\P^3$,
which is the variety of $4\times 4$ symmetric matrices of rank $1$. 
\end{remark}

\begin{prop}\label{prop:eddegree} The essential variety ${\mathcal E}_\C$ is isomorphic to a hyperplane section
of the variety of rank $\le 2$ elements in $\P(S_{2}(U\otimes V))$.  Concretely, that ambient space 
identifies with the projective variety of $4\times 4$ symmetric matrices of rank $\le 2$, denoted by $PX^s_{4,2}$,
and the section consists of traceless $4 \times 4$ symmetric matrices of rank $\le 2$.
\end{prop}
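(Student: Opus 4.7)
The plan is to identify $\P(S_2(U\otimes V))$ with the projective space of $4\times 4$ symmetric matrices, characterize the hyperplane $\P(S_2U\otimes S_2V)$ inside it as the traceless locus, and then show that cutting $PX^s_{4,2}$ by this hyperplane recovers $\mathcal{E}_\C$. As $\textup{SL}(U)\times\textup{SL}(V)$-modules there is the standard decomposition
\[
S_2(U\otimes V)\ \cong\ S_2U\otimes S_2V\ \oplus\ \Lambda^2 U\otimes\Lambda^2 V,
\]
in which the right-hand summand is one-dimensional because $\dim U=\dim V=2$. The symplectic forms $\omega_U,\omega_V$ combine into a non-degenerate symmetric bilinear form $\omega_U\otimes\omega_V$ on the $4$-dimensional space $W=U\otimes V$; contracting $S_2W$ against it is the trace functional, and its kernel is necessarily the unique $9$-dimensional irreducible summand $S_2U\otimes S_2V$. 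This exhibits $\P(S_2U\otimes S_2V)$ as the hyperplane of traceless $4\times 4$ symmetric matrices.

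Next I will verify the inclusion $\mathcal{E}_\C\subseteq PX^s_{4,2}\cap\P(S_2U\otimes S_2V)$ using Proposition~\ref{prop:singessential}. A typical element $ab^T$ of $\textup{Sing}(\mathcal{E}_\C)$ has $Q(a)=Q(b)=0$, and since the isotropic conic in $\P(S_2U)\cong\P^2$ is the Veronese image $\nu_2(\P U)$, we may write $a=u^2$ and $b=v^2$ with $u\in U$, $v\in V$. Under the natural inclusion $S_2U\otimes S_2V\hookrightarrow S_2(U\otimes V)$, the element $u^2\otimes v^2$ matches with $(u\otimes v)^2\in S_2W$, a rank-one symmetric tensor. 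Hence $\textup{Sing}(\mathcal{E}_\C)\subseteq\nu_2(\P W)\cap\P(S_2U\otimes S_2V)$, and because $\sigma_2$ is monotone and stays inside linear subspaces,
\[
\mathcal{E}_\C=\sigma_2(\textup{Sing}(\mathcal{E}_\C))\ \subseteq\ \sigma_2(\nu_2(\P W))\cap\P(S_2U\otimes S_2V)\ =\ PX^s_{4,2}\cap\P(S_2U\otimes S_2V).
\]

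It remains to upgrade this containment to equality, which I expect to be the main technical point since this specific hyperplane is not generic and Bertini-type irreducibility is not automatic. One clean route is the parametrization $(w_1,w_2)\mapsto w_1^2+w_2^2$ from $W\oplus W$ onto $PX^s_{4,2}$: the preimage of the hyperplane is the affine hypersurface $\det(w_1)+\det(w_2)=0$, cut out by a non-degenerate quadratic form of rank $8$ and therefore irreducible, so its image $PX^s_{4,2}\cap\P(S_2U\otimes S_2V)$ is also irreducible, of dimension $6$. A parallel degree check (using $\deg\mathcal{E}_\C=10$ and the classical degree $\deg PX^s_{4,2}=10$) arrives at the same conclusion. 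Since $\mathcal{E}_\C$ is an irreducible $6$-dimensional subvariety of an irreducible $6$-dimensional variety, the two coincide, proving the proposition.
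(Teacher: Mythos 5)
Your proof is correct and follows essentially the same route as the paper's: the Cauchy decomposition $S_2(U\otimes V)\cong S_2U\otimes S_2V\oplus\wedge^2U\otimes\wedge^2V$ identifies $\P(S_2U\otimes S_2V)$ as the hyperplane of traceless symmetric $4\times 4$ matrices, Proposition~\ref{prop:singessential} and the rule $u^2\otimes v^2\mapsto(u\otimes v)^2$ give the inclusion $\mathcal{E}_\C\subseteq PX^s_{4,2}\cap\P(S_2U\otimes S_2V)$, and a dimension-and-degree comparison yields equality. (Your extra parametrization $(w_1,w_2)\mapsto w_1^2+w_2^2$ cleanly establishes irreducibility of the hyperplane section, and the only quibble is a dimension slip---both projective varieties have dimension $5$, not $6$; you are implicitly counting affine cones, which does not affect the argument.)
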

\begin{proof}
The embedding of $\P(U)\times\P(V)$
in $\P(S_{2}(U)\otimes S_{2}(V))$ is given by $(u, v)\mapsto u^2\otimes v^2$. 
Recall that Cauchy's formula states $S_{2}(U \otimes V)=
\left(S_{2}(U)\otimes S_{2}(V)\right)\oplus \left(\wedge^{2}U\otimes\wedge^{2}V\right)$, where 
$\dim(U\otimes V) = 4$.  Hence, $\P(S_{2}(U)\otimes S_{2}(V))$
is equivariantly embedded as a codimension one subspace in $\P(S_{2}(U\otimes V))$. 
The image is the subspace of traceless elements, and this map sends
$u^2\otimes v^2 \mapsto (u \otimes v)^2$.
By Proposition \ref{prop:singessential}, 
we have shown that
$\textup{Sing}(\mathcal{E}_{\C})$ 
embeds into a
hyperplane section of the variety of rank 1 elements
in $\P(S_{2}(U\otimes V))$.
So, $\mathcal{E}_{\C} = \sigma_{2}(\text{Sing}(\mathcal{E}_{\C}))$
embeds into that hyperplane section of
the variety of rank $\leq 2$ elements.
Comparing dimensions and degrees,
the result follows.
\end{proof}

\begin{remark} In light of the description in Proposition \textup{\ref{prop:eddegree}}, 
it follows by Example \textup{3.2} and Corollary \textup{6.4} of \textup{\cite{DHOST}}
that the Euclidean distance degree is
$\textup{EDdegree}(\mathcal{E}_{\C})=6$. This result has been proved
also in \textup{\cite{DLT}}, where the computation of $\textup{EDdegree}$
was performed in the more general setting of orthogonally invariant varieties.
This quantity measures the algebraic complexity of finding the nearest 
point on $\mathcal{E}$ to a given noisy data point in $\RR^{3 \times 3}$.
\end{remark}

\subsection{Coordinate description}\label{subsec:coord}
We now make the determinantal description of 
$\mathcal{E}_{\C}$ in Proposition \ref{prop:eddegree} 
explicit in coordinates.
For this, denote $a=(a_1, a_2, a_3)^T \in \C^3$. We have $Q(a)=a_1^2+a_2^2+a_3^2$.
 The $\text{SL}(2,\C)$-orbit $Q(a)=0$ is parametrized by
$\left(u_{1}^2-u_{2}^2,2u_{1}u_{2},\sqrt{-1}(u_{1}^2+u_{2}^2)\right)^{T}$ where $(u_{1}, u_{2})^{T} \in \C^{2}$.
Let: 
$$M=\begin{pmatrix}m_{11}&m_{12}&m_{13}\\
m_{21}&m_{22}&m_{23}\\
m_{31}&m_{32}&m_{33}\end{pmatrix}\in {\C}^{3\times 3},$$

\noindent and define the $4\times 4$ traceless symmetric matrix $s(M)$ (depending linearly on $M$):
{\footnotesize
\begin{align}\label{eqn:isometry}
s(M):=
\bgroup\frac{1}{2}\begin{pmatrix}
\\[-10pt]
{m}_{11}-{m}_{22}-{m}_{33}&
       {m}_{13}+{m}_{31}&
       {m}_{12}+{m}_{21}&
       {m}_{23}-{m}_{32}\\[2pt]
       {m}_{13}+{m}_{31}&
       -{m}_{11}-{m}_{22}+{m}_{33}&
       {m}_{23}+{m}_{32}&
       {m}_{12}-{m}_{21}\\[2pt]
       {m}_{12}+{m}_{21}&
       {m}_{23}+{m}_{32}&
       -{m}_{11}+{m}_{22}-{m}_{33}&
       -{m}_{13}+{m}_{31}\\[2pt]
       {m}_{23}-{m}_{32}&
       {m}_{12}-{m}_{21}&
       -{m}_{13}+{m}_{31}&
       {m}_{11}+{m}_{22}+{m}_{33}\\
       \\[-10pt]
       \end{pmatrix}.\egroup
\end{align}}

\noindent This construction furnishes a new view on the essential variety $\mathcal{E}$, as described in Proposition \ref{prop:sm}.

\begin{prop}\label{prop:sm}
The linear map $s$ in \textup{($\ref{eqn:isometry}$)} is a real isometry from the space of $3 \times 3$ real matrices
to the the space of traceless symmetric $4 \times 4$ real matrices.  We have that:
$$M \in \mathcal{E} \,\, \Longleftrightarrow \,\, \textup{rk}(s(M)) \le 2.$$
The complexification of $s$, denoted again by $s$, satisfies
for any $M \in \C^{3 \times 3}$:

$$M \in {\textup{Sing}}(\mathcal{E}_{\C}) \,\, \Longleftrightarrow \,\, \textup{rk}(s(M)) \le 1,$$
$$M \in \mathcal{E_{\C}} \,\, \Longleftrightarrow \,\, \textup{rk}(s(M)) \le 2.$$

\end{prop}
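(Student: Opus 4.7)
The plan is to leverage Proposition \ref{prop:eddegree}, which already gives an abstract $\mathrm{SL}(U)\times\mathrm{SL}(V)$-equivariant isomorphism between $\C^{3\times 3}$ and the space of traceless symmetric $4\times 4$ matrices sending $\mathcal{E}_{\C}$ to the rank-$\le 2$ locus; the task is to show that the explicit formula $s$ of (\ref{eqn:isometry}) realizes that intrinsic map (up to a nonzero real scalar), from which the stated rank characterizations are immediate.

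First, inspection of (\ref{eqn:isometry}) shows $s(M)$ is manifestly symmetric, and the diagonal sums to
\begin{equation*}
\tfrac{1}{2}\bigl[(m_{11}-m_{22}-m_{33})+(-m_{11}-m_{22}+m_{33})+(-m_{11}+m_{22}-m_{33})+(m_{11}+m_{22}+m_{33})\bigr]=0,
\end{equation*}
so $s$ lands in the $9$-dimensional space of traceless symmetric $4\times 4$ real matrices. The isometry property $\|s(M)\|_{F}^{2}=\|M\|_{F}^{2}$ is an entry-by-entry Frobenius norm computation: the four squared diagonal entries contribute exactly $m_{11}^{2}+m_{22}^{2}+m_{33}^{2}$ after the cross terms cancel in pairs, and the three pairs of symmetric/antisymmetric off-diagonal blocks $\tfrac{1}{2}(m_{ij}\pm m_{ji})$ contribute each $m_{ij}^{2}+m_{ji}^{2}$ after doubling. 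In particular $s$ is a real linear bijection between the two $9$-dimensional spaces.

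Second, I would match $s$ with the intrinsic map of Proposition \ref{prop:eddegree}. That map factors $\C^{3\times 3}\cong S_{2}(U)\otimes S_{2}(V)\hookrightarrow S_{2}(U\otimes V)$ by $u^{2}\otimes v^{2}\mapsto(u\otimes v)^{2}$ via Cauchy's decomposition $S_{2}(U\otimes V)=S_{2}(U)\otimes S_{2}(V)\oplus\wedge^{2}U\otimes\wedge^{2}V$; both pieces are irreducible $\mathrm{SL}(U)\times\mathrm{SL}(V)$-modules, so any two equivariant isomorphisms between $\C^{3\times 3}$ and the traceless symmetric subspace agree up to a scalar. It therefore suffices to verify agreement on a single nonzero orbit representative of the singular locus. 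Using the isotropic parameterization $a=(u_{1}^{2}-u_{2}^{2},\,2u_{1}u_{2},\,\sqrt{-1}(u_{1}^{2}+u_{2}^{2}))^{T}$ from \S\ref{subsec:coord} (and similarly $b$ in $v_{1},v_{2}$), I would substitute $M=ab^{T}$ into (\ref{eqn:isometry}) and verify directly that $s(ab^{T})=c\,(u\otimes v)(u\otimes v)^{T}$ in the ordered basis $(u_{1}\otimes v_{1},\,u_{1}\otimes v_{2},\,u_{2}\otimes v_{1},\,u_{2}\otimes v_{2})$ for a fixed constant $c$ — making $s(ab^{T})$ a rank-$1$ symmetric matrix. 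Equivariance of $s$ itself follows from $\mathrm{SL}(U)\times\mathrm{SL}(V)$-invariance of $\mathcal{E}_{\C}$ via \cite[Theorem 2.2]{DLOT} and uniqueness of the equivariant embedding.

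Once $s$ is identified with the isomorphism of Proposition \ref{prop:eddegree}, the three rank statements follow immediately. By Proposition \ref{prop:singessential}(i), $\mathrm{Sing}(\mathcal{E}_{\C})$ is the set of $ab^{T}$ with $Q(a)=Q(b)=0$, which by the orbit check maps under $s$ bijectively onto the rank-$1$ traceless symmetric matrices. Since $\mathcal{E}_{\C}=\sigma_{2}(\mathrm{Sing}(\mathcal{E}_{\C}))$ by Proposition \ref{prop:singessential}(ii) and $s$ is linear, $s(\mathcal{E}_{\C})$ lies in the rank-$\le 2$ locus; matching codimension $3$ and degree $10$ (via Proposition \ref{prop:eddegree}) gives equality as varieties. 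The real case $M\in\mathcal{E}\iff\mathrm{rk}\,s(M)\le 2$ follows because $s$ is defined over $\RR$ and the ideal of $\mathcal{E}_{\C}$ is real radical. The main obstacle is the explicit orbit-representative computation in step two: the sign pattern of (\ref{eqn:isometry}) and the factor $\sqrt{-1}$ in the isotropic parameterization must line up with a carefully chosen identification $U\otimes V\cong\C^{4}$, and getting the signs right is where the bulk of the care is required.
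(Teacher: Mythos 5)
Your overall strategy is the same as the paper's: establish the rank-$1$ identity on the orbit parametrization of $\mathrm{Sing}(\mathcal{E}_{\C})$, extend by linearity/secants, and verify the isometry via a trace/Frobenius-norm computation (these are literally the same calculation, since $\mathrm{tr}(s(M)s(M)^{T})=\|s(M)\|_{F}^{2}$). The framing via Proposition \ref{prop:eddegree}, Schur's lemma, and uniqueness of the equivariant isomorphism is a slightly cleaner conceptual wrapper, but the computational core is identical.

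However, there is a genuine error in your key step. You claim that $s(ab^{T})=c\,(u\otimes v)(u\otimes v)^{T}$ in the ordered basis $(u_{1}\otimes v_{1},\,u_{1}\otimes v_{2},\,u_{2}\otimes v_{1},\,u_{2}\otimes v_{2})$. In that basis $u\otimes v$ has coordinates $z=(u_{1}v_{1},u_{1}v_{2},u_{2}v_{1},u_{2}v_{2})^{T}$, so
$\mathrm{tr}\bigl(zz^{T}\bigr)=\sum_{i}z_{i}^{2}=(u_{1}^{2}+u_{2}^{2})(v_{1}^{2}+v_{2}^{2})$,
which is nonzero for generic $u,v$, whereas $s(ab^{T})$ is traceless by construction. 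So this equality cannot hold for any nonzero $c$. The $\mathrm{SL}(U)\times\mathrm{SL}(V)$-invariant form on $U\otimes V$ is hyperbolic (the tensor product of the symplectic forms), not the coordinate sum of squares in your proposed basis; decomposable tensors $u\otimes v$ are isotropic for the former, but not for the latter. You acknowledge at the end that a "carefully chosen identification $U\otimes V\cong\C^{4}$" is required, but that remark contradicts the explicit basis you wrote. The paper supplies the missing ingredient: the vector
$k=\bigl(\sqrt{-1}(u_{2}v_{2}-u_{1}v_{1}),\ u_{1}v_{1}+u_{2}v_{2},\ -\sqrt{-1}(u_{1}v_{2}+u_{2}v_{1}),\ -u_{1}v_{2}+u_{2}v_{1}\bigr)$,
which is a linear change of coordinates taking the invariant hyperbolic form to $\sum k_{i}^{2}$ (so that $\sum k_{i}^{2}=0$ identically), and the verified identity is $k^{T}k=s(M)$. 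Without an explicit isometric identification of this kind, the orbit-representative computation you propose does not close. Also, the mention of the real radical property is not needed for the real equivalence: since $\mathcal{E}=\mathcal{E}_{\C}\cap\RR^{3\times 3}$ by definition, the real case follows immediately from the complex case once $s$ is known to be defined over $\RR$.
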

\begin{proof} 
We construct the correspondence over $\C$ at the level of $\text{Sing}(\mathcal{E}_{\C})$
and then we extend it by linearity.
Choose coordinates $(u_{1},u_{2})$ in $U$ and coordinates $(v_{1},v_{2})$ in $V$.
Consider the following parametrization of matrices $M \in \text{Sing}(\mathcal{E}_{\C})$:
\begin{equation}
M \, = \, \begin{pmatrix}
\\[-10pt] 
u_{1}^2-u_{2}^2 \\[2pt] 
2u_{1}u_{2} \\[2pt] 
\sqrt{-1}(u_{1}^2+u_{2}^2) \\
\\[-10pt] 
\end{pmatrix}
\cdot\left(v_{1}^2-v_{2}^2, \, 2v_{1}v_{2}, \, \sqrt{-1}(v_{1}^2+v_{2}^2)\right).\label{eq:mrk1}\end{equation}
Consider also the following parametrization of the Euclidean quadric in $U \otimes V$:
$$k=\left(\sqrt{-1}(u_{2}v_{2}-u_{1}v_{1}), \,
     u_{1}v_{1} +u_{2}v_{2}, \, 
     -\sqrt{-1}( u_{1}v_{2} +u_{2}v_{1} ), \,
     -u_{1}v_{2} +u_{2}v_{1}\right).$$ 
The variety of rank 1 traceless $4\times 4$ symmetric matrices
is accordingly parametrized by $k^{T}k$.
Substituting (\ref{eq:mrk1}) into the right-hand side below, a computation verifies that:
$$k^{T}k = s(M).$$

\noindent This proves the second equivalence in the statement above and explains the definition of $s(M)$,
namely that it is the equivariant embedding from Proposition \ref{prop:eddegree} in coordinates. 
The third equivalence follows because $\mathcal{E}_{\C} = \sigma_{2}(\text{Sing}(\mathcal{E}_{\C}))$,
by Proposition \ref{prop:singessential}(ii).
For the first equivalence, we note that $s$ is defined over $\RR$ and now a direct computation verifies that $\textup{tr}\left(s(M)s(M)^{T}\right) = \textup{tr}\left(MM^{T}\right)$
for $M \in \RR^{3 \times 3}$.
\end{proof}

Note that the ideal of $3$-minors of $s(M)$ is indeed generated by the ten cubics in (\ref{eq:ess}).

\begin{remark}
The critical points of the distance function from any data point $M \in \RR^{3 \times 3}$ to ${\mathcal E}$
can be computed by means of the SVD of $s(M)$, as in \textup{\cite[{Example} 2.3]{DHOST}}.
\end{remark}

\section{Ulrich sheaves on the variety of symmetric $4\times 4$ matrices of rank $\le 2$}\label{sec:Ulrich}

 Our goal is to construct the Chow form of the essential variety.
By the theory of Eisenbud and Schreyer \cite{ESW}, this can be done provided 
one has an Ulrich sheaf on this variety. The 
notions of Ulrich sheaf, Chow forms and the construction of 
\cite{ESW} will be explained below. 

As shown in \S \ref{sec:determinantal}, the essential variety $\mathcal{E}_{\C}$ is a linear section of the 
projective variety
$PX^s_{4,2}$ of symmetric $4 \times 4$ matrices of rank $\leq 2$. If we
construct an Ulrich sheaf on $PX^s_{4,2}$, then a quotient of 
this sheaf by a linear form
is an Ulrich sheaf on $\mathcal{E}_{\C}$ provided
that linear form is regular for the Ulrich sheaf on $PX^s_{4,2}$. 
We will achieve this twice, in $\S \ref{subsec:firstUlrich}$ and $\S \ref{subsec:secondUlrich}$.

\subsection{Definition of Ulrich modules and sheaves}

\begin{definition} \label{def:UlrichUlrich}
A graded module $M$ over a polynomial ring $A = \kr[x_0, \ldots, x_n]$
is an  \textup{Ulrich module} provided:
\begin{enumerate}
\item It is generated in degree $0$ and has a linear minimal free resolution:
\begin{equation} \label{eq:UlrichLinres}
 0 \vmto{} M \leftarrow A^{\beta_0} \vmto{} A(-1)^{\beta_1} \vmto{} 
A(-2)^{\beta_2} \lmto{d_2} \cdots \vmto{} A(-c)^{\beta_c} \vmto{} 0.
\end{equation}
\item The length of the resolution $c$ equals the codimension of
the support of the module $M$.
\item[2'] \hspace{-0.3cm}. The Betti numbers are $\beta_i = \binom{c}{i} \beta_0$
for $i = 0, \ldots, c$. 
\end{enumerate}
One can use either (1) and (2), or equivalently, (1) and (2)' as the definition.
\end{definition}

A sheaf $\cF$
on a projective space $\PP^{n}$ with support of dimension $\geq 1$ 
is an {\it Ulrich sheaf}
provided it is the sheafification of an Ulrich module. Equivalently,
the module of twisted global sections
$M = \bigoplus_{d \in \ZZ} H^{0}(\PP^{n}, \cF(d))$ 
is an Ulrich module over the polynomial ring $A$.
\begin{fact}\label{fact:UlrichRank}
If the support of an Ulrich sheaf $\cF$ is a variety $X$ of degree $d$,
then $\beta_0$ is a multiple of $d$, say $rd$. This corresponds
to $\cF$ being a sheaf of rank $r$ on $X$.
\end{fact}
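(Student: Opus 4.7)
The plan is a Hilbert series calculation. From the linear resolution in Definition~\ref{def:UlrichUlrich}, the graded Hilbert series of $M$ factors as
$$HS_M(t) \; = \; \frac{\sum_{i=0}^{c}(-1)^i \beta_i\, t^i}{(1-t)^{n+1}}.$$
Substituting the equivalent condition $(2')$, namely $\beta_i = \binom{c}{i}\beta_0$, collapses the numerator to $\beta_0(1-t)^c$, whence
$$HS_M(t) \; = \; \frac{\beta_0}{(1-t)^{n-c+1}}.$$
Because $X = \operatorname{Supp}(\cF)$ has projective dimension $n-c$, its affine cone has Krull dimension $n-c+1 = \dim M$, so this expression is already the Hilbert series in the reduced form $Q(t)/(1-t)^{\dim M}$. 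Consequently, the multiplicity of $M$ satisfies $e(M) = Q(1) = \beta_0$.

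The second step is to identify this multiplicity with $r\cdot d$ via the associativity formula,
$$e(M) \; = \; \sum_{\mathfrak{p}}\operatorname{length}_{A_\mathfrak{p}}(M_\mathfrak{p})\, e(A/\mathfrak{p}),$$
summed over minimal primes $\mathfrak{p}$ of $M$. Because the linear resolution has length $c$ equal to the codimension of the support, the Auslander--Buchsbaum formula forces $M$ to be maximal Cohen--Macaulay, and hence to have no embedded primes. Since $X$ is irreducible, only one term appears: $\mathfrak{p}$ is the defining prime of $X$, with $e(A/\mathfrak{p}) = d$, while $\operatorname{length}_{A_\mathfrak{p}}(M_\mathfrak{p})$ is by definition the generic rank $r$ of $\cF$ as a coherent sheaf on $X$. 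Combining the two identities yields $\beta_0 = r\, d$, as claimed.

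I expect the Hilbert series manipulation to be entirely mechanical, so the only point requiring genuine input is the appeal to the associativity formula together with the Cohen--Macaulay property; the latter is forced by the Ulrich conditions and rules out any embedded contributions to the multiplicity. Irreducibility of the support holds in every application of this Fact in the paper (namely $\mathcal{E}_{\C}$ and $PX^s_{4,2}$), so no further hypothesis is needed.
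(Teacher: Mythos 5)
The paper states this as a ``Fact'' without proof (it is a standard result in the Ulrich-sheaf literature, traceable to \cite{ESW}), so there is no in-paper argument to compare against. Your proof is correct and is essentially the standard Hilbert-series-plus-associativity argument: the conditions (1) and (2)' collapse the alternating sum of Betti numbers to $\beta_0(1-t)^c$, giving multiplicity $e(M)=\beta_0$, and the associativity formula with $X$ irreducible converts $e(M)$ into $r\cdot d$ where $r=\operatorname{length}_{A_\mathfrak p}(M_\mathfrak p)$ is the generic rank. Two small remarks: first, where you write that $X$ ``has projective dimension $n-c$'' you of course mean its dimension as a projective variety is $n-c$ (projective dimension usually refers to homological pd); second, the Cohen--Macaulay observation is not actually needed, since the associativity formula already ranges only over primes $\mathfrak p$ with $\dim A/\mathfrak p = \dim M$ and $X$ is assumed irreducible, so only the defining prime of $X$ contributes regardless of embedded primes. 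Otherwise the argument is clean and complete.
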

Since there is a one-to-one correspondence between Ulrich modules
over $A$ and Ulrich sheaves
on $\PP^n$, we interchangably speak of both. But in our constructions
we focus on Ulrich modules.  
A prominent conjecture of Eisenbud and Schreyer \cite[p.543]{ESW}
states that on any variety $X$ in a projective space, there is an Ulrich sheaf
whose support is $X$. 

\subsection{The variety of symmetric $4 \times 4$ matrices}\label{subsec:sym44}
We fix notation.  Let $X^s_4$ be the space of symmetric $4 \times 4$ matrices over
the field $\kr$. This identifies as $\kr^{10}$. 
Let $x_{ij} = x_{ji}$ be 
the coordinate functions on $X^s_4$ where $1 \leq i \leq j \leq 4$,
so the coordinate ring of $X^s_4$ is: $$A = \kr[x_{ij}]_{1 \leq i \leq j \leq 4}.$$ 
For $0 \leq r \leq 4$, denote by $X^s_{4,r}$ the affine subvariety of $X^s_4$ consisting of matrices
of rank $\leq r$. The ideal of $X^s_{4,r}$ is generated by the
$(r+1) \times (r+1)$-minors of the generic $4 \times 4$ 
symmetric matrix $(x_{ij})$.  This
is in fact a prime ideal, by \cite[Theorem 6.3.1]{Wey}.
The rank subvarieties
have the following degrees and codimensions:

\medskip

\begin{centering}

\begin{tabular}{|l|l|l|}
\hline variety & degree & codimension \\
\hline $X^s_{4,4}$ & 1 & 0 \\
\hline $X^s_{4,3}$ & 4 & 1 \\
\hline $X^s_{4,2}$ & 10 & 3 \\
\hline $X^s_{4,1}$ & 8 & 6 \\
\hline $X^s_{4,0}$ & 1 & 10 \\
\hline
\end{tabular}

\end{centering}

\medskip
\noindent Since the varieties
$X^s_{4,r}$ are defined by homogeneous ideals, they
give rise to projective varieties $PX^s_{4,r}$ in the projective
space $\PP^9$.  However, in \S \ref{subsec:firstUlrich} and \S \ref{subsec:secondUlrich}
it will be convenient to work with
affine varieties, and general (instead of special)
linear group actions.

The group $\textup{GL}(4, \C)$ acts on $X^s_4$. If $M \in \textup{GL}(4, \C)$ 
and $X \in X^s_4$, the action is as follows:
\[ M_{\cdot} X = M \cdot X \cdot M^{T}. \]
Since any symmetric matrix
can be diagonalized by a unitary coordinate change,
there are five orbits of the action of $\textup{GL}(4,\C)$ on $X^s_4$, one
per rank of the symmetric matrix.  Let:
$$E=\C^4$$ be a four-dimensional complex vector space.  
The coordinate ring
of $X^s_4$ identifies as $A \iso \Sym(S_{2}(E))$. 
The space of symmetric matrices $X^s_4$ may
then be identified with the dual space $S_2(E)^*$,
so again we see that $\textup{GL}(E) = \textup{GL}(4, \C)$ acts on $S_2(E)^*$.

\subsection{Representations and Pieri's rule}
We shall recall some basic representation theory of the general 
linear group $\textup{GL}(W)$, where $W$ is a $n$-dimensional complex vector space.
The irreducible representations of $\textup{GL}(W)$ are given by Schur modules
$S_\lambda(W)$ where $\lambda$ is a partition: a sequence of integers
$\lambda_1 \geq \lambda_2 \geq \cdots \geq \lambda_n$. 
When $\lambda = d,0, \ldots, 0$, then $S_\lambda(W)$ is the 
$d^{\textup{th}}$ symmetric power $S_{d}(W)$. When $\lambda = 1,\ldots, 1,0, \ldots, 0$, with
$d$ $1$'s, then $S_\lambda(W)$ is the exterior wedge $\wedge^d W$. For~all~partitions~$\lambda$ there are isomorphisms of $\textup{GL}(W)$-representations: 
\[ S_{\lambda}(W)^{*}  \iso S_{-\lambda_{n}, \ldots, -\lambda_{1}}(W) 
\text{\quad \quad and \quad \quad} 
S_\lambda(W) \te (\wedge^n W)^{\te r} \iso S_{\lambda + r \cdot \bfone}(W)
\]
where $\bfone = 1,1,\ldots, 1$. Here  $\wedge^n W$ is the one-dimensional
representation $\C$ of $\textup{GL}(W)$ where a linear map 
$\phi$ acts by its determinant.

Denote by $|\lambda| := \lambda_1 + \cdots + \lambda_n$. Assume $\lambda_n, \, \mu_n \geq 0$.
The tensor product of two Schur modules $S_\lambda(W) \te S_\mu(W)$
splits into irreducibles as
a direct sum of Schur modules:
\[ \bigoplus_{\nu} u(\lambda,\mu; \nu) S_\nu (W) \]
where the sum is over partitions with $|\nu| = |\mu| + |\lambda|$. 
The multiplicities $u(\lambda,\mu;\nu) \in \mathbb{Z}_{\geq 0}$ are determined by the
Littlewood-Richardson rule \cite[Appendix A]{FH}. 
In one case, that will be important to
us below, there is a particularly
nice form of this rule. Given two partitions $\lambda^\prime$ and $\lambda$, we
say that $\lambda^\prime / \lambda$ is a {\it horizontal strip} if 
$\lambda^\prime_i \geq \lambda_i \geq \lambda_{i+1}^\prime$.  

\begin{fact}[Pieri's rule] As $\textup{GL}(W)$-representations, we have the rule:
\[ S_\lambda(W) \te S_d(W) \,\,\,\,\,\,\, \iso \bigoplus_{\overset{|\lambda^\prime| \, = \, |\lambda| + d}
{\lambda^\prime / \lambda \textup{ is a horizontal strip}}} 
\!\!\!\!\!\!\!\!\!\!\!\! S_{\lambda^\prime}(W). \]
\end{fact}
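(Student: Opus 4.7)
The plan is to reduce Pieri's rule to the corresponding identity in the ring of symmetric polynomials, and then verify that identity by a bijection on semistandard Young tableaux.

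First, I would invoke the fact that polynomial representations of $\textup{GL}(W)$ are determined up to isomorphism by their characters, so it suffices to check the corresponding identity of characters. The character of $S_\lambda(W)$ is the Schur polynomial $s_\lambda(x_1,\ldots,x_n)$, and the character of $S_d(W)$ is the complete homogeneous symmetric polynomial $h_d(x_1,\ldots,x_n)$. Hence the claimed $\textup{GL}(W)$-decomposition is equivalent to the symmetric-function identity
\[ s_\lambda \cdot h_d \;=\; \sum_{\lambda'} s_{\lambda'}, \]
where $\lambda'$ ranges over partitions with $|\lambda'|=|\lambda|+d$ and $\lambda'/\lambda$ a horizontal strip.

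Next, I would adopt the tableau definition $s_\lambda = \sum_T x^T$ ranging over semistandard Young tableaux $T$ of shape $\lambda$ with entries in $\{1,\ldots,n\}$, together with the expansion $h_d = \sum_S x^S$ over weakly increasing sequences $S$ of length $d$ in $\{1,\ldots,n\}$. Then $s_\lambda \cdot h_d$ is the generating function for pairs $(T,S)$ of these types. I would construct a weight-preserving bijection between such pairs and semistandard tableaux $T'$ of shape $\lambda'$ with $\lambda'/\lambda$ a horizontal strip, by appending the entries of $S$ (in weakly increasing order) to the rows of $T$, placing each entry into a new cell chosen from left to right so that no two land in the same column; conversely, given $T'$ one restricts to the subshape $\lambda$ to recover $T$ and reads the added cells in row order to recover $S$. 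The constraint that $\lambda'/\lambda$ is a horizontal strip corresponds precisely to the fact that column-strictness of $T'$, combined with the weak monotonicity of the appendage, forbids two new cells in the same column.

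The main obstacle is checking the bijection rigorously: one must choose the insertion order so that column-strictness and row weak-increase of $T'$ are preserved, and verify that the inverse map recovers the original pair $(T,S)$. A cleaner alternative is to expand $s_\lambda h_d$ via the Jacobi--Trudi determinantal identity and collapse the resulting sum, though the combinatorial route is more transparent. A yet shorter route, available once the full Littlewood--Richardson rule already cited in the paper is granted, is to observe that $S_d(W) = S_{(d)}(W)$ and that the Littlewood--Richardson coefficient $u(\lambda,(d);\lambda') \in \{0,1\}$ equals $1$ exactly when $\lambda'/\lambda$ is a horizontal strip of size $d$; any of these routes closes the argument.
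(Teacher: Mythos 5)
The paper does not prove Pieri's rule; it states it as a known \textbf{Fact}, positioned as the special case $\mu = (d)$ of the Littlewood--Richardson rule cited to \cite[Appendix~A]{FH}. Your third route---observing that $u(\lambda,(d);\lambda')$ is $1$ exactly when $\lambda'/\lambda$ is a horizontal strip of size $d$---is therefore the one that matches the paper's framing, and it is a clean and correct way to close the argument given what the paper already cites.

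Your main route (reduce to characters, then prove $s_\lambda h_d = \sum_{\lambda'} s_{\lambda'}$ by a tableau bijection) is a standard and valid strategy, but the bijection as you describe it does not work. You claim the inverse map is ``restrict $T'$ to the subshape $\lambda$ to recover $T$ and read the strip cells to recover $S$,'' which forces the forward map to leave $T$ fixed and place the entries of $S$ into the strip cells. That map is not even well defined: take $\lambda = (2,1)$, $T = \begin{smallmatrix}1&2\\2&\end{smallmatrix}$, $S = (1,3)$. There is no cell adjacent to $\lambda$ into which the entry $1$ can be placed without violating column-strictness or row weak increase (every admissible new cell lies below or to the right of a $2$), so no semistandard $T'$ on a horizontal strip over $\lambda$ restricts to $T$ with strip content $\{1,3\}$. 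The genuine bijective proof uses RSK row insertion of the weakly increasing word $S$ into $T$; under insertion the entries of $T$ get bumped, so the resulting $T'$ does \emph{not} restrict to $T$ on $\lambda$, and the inverse requires reverse row insertion rather than restriction. You do flag that ``the main obstacle is checking the bijection rigorously,'' which is fair self-assessment, but the specific description you give would need to be replaced by the insertion argument (or by your Jacobi--Trudi or Littlewood--Richardson alternatives) to be correct.
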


\subsection{The first Ulrich sheaf}\label{subsec:firstUlrich}
We are now ready to describe our first Ulrich sheaf on the projective variety 
$PX^2_{4,2}$. We construct it as an Ulrich module supported on
the variety $X^s_{4,2}$.  We use notation from \S \ref{subsec:sym44},
so $E$ is $4$-dimensional. 
Consider $S_3(E) \te S_2(E)$. By Pieri's rule this decomposes as:
\[ S_5(E) \oplus S_{4,1}(E) \oplus S_{3,2}(E). \]

We therefore get a $\textup{GL}(E)$-inclusion 
$ S_{3,2}(E) \rightarrow S_3(E) \te S_2(E)$ 
unique up to nonzero scale.  
Since $A_{1} = S_{2}(E)$ from \S \ref{subsec:sym44}, this extends uniquely to an $A$-module map:
\[ S_3(E) \te A \lmto{\alpha} S_{3,2}(E) \te A(-1). \]
This map can easily be programmed using \texttt{Macaulay2} and the package 
\texttt{PieriMaps} \cite{Sam}:

\begin{verbatim}
R=QQ[a..d]
needsPackage "PieriMaps"
f=pieri({3,2},{2,2},R)
S=QQ[a..d,y_0..y_9]
a2=symmetricPower(2,matrix{{a..d}})
alpha=sum(10,i->contract(a2_(0,i),sub(f,S))*y_i)
\end{verbatim}

\noindent We can then compute the resolution of the cokernel of $\alpha$ in \texttt{Macaulay2}. It has the form:
\[ A^{20} \lmto{\alpha} A(-1)^{60} \vmto{} A(-2)^{60} \leftarrow A(-3)^{20}. \]
Thus the cokernel of $\alpha$ 
is an Ulrich module by (1) and (2)' in Definition \ref{def:UlrichUlrich}.  
An important point is that
the \texttt{res} command in \texttt{Macaulay2} computes differential matrices in
unenlightening bases.  We completely and intrinsically describe the $\textup{GL}(E)$-resolution below:

\begin{prop} \label{pro:UlrichFirst}
The cokernel of $\alpha$ is an Ulrich module $M$ of rank $2$ 
supported on the variety $X^s_{4,2}$. The resolution of $M$
is $\textup{GL}(E)$-equivariant and it is:
\begin{align} \label{eq:UlrichRes}
F_\dt :  S_3(E) \te A \lmto{\alpha} S_{3,2}(E) \te A(-1)
 & \lmto{\phi} S_{3,3,1}(E) \te A(-2)  \\
  & \lmto{\beta} S_{3,3,3}(E) \te A(-3) \notag
\end{align}
with ranks $20,60,60,20$, and where all differential maps
are induced by Pieri's rule.  The dual complex of this resolution
is also a resolution, and these two resolutions are isomorphic up to twist.
As in \textup{\cite{SW}}, we can visualize the resolution by:
\[
  0 \,\, \gets \,\, M \,\, \vmto{} \,\, \tiny \tableau[scY]{ &  & } \,\, \gets \,\, \tiny \tableau[scY]{ &  & \\ & } \,\, \gets \,\,
   \tiny \tableau[scY]{ & & \\ & & \\ \\} \,\, \gets \,\, \tiny \tableau[scY]{ & & \\ & & \\ & &} \,\, \gets \,\, 0.
  \]
\end{prop}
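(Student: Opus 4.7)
The plan is to verify in turn that $F_\bullet$ is a complex, that it is exact, that its cokernel $M$ has the claimed Ulrich properties supported on $X^s_{4,2}$, and that $F_\bullet$ is self-dual up to twist. By Pieri's rule, each of the inclusions $S_{3,2}(E)\subset S_3(E)\otimes S_2(E)$, $S_{3,3,1}(E)\subset S_{3,2}(E)\otimes S_2(E)$, and $S_{3,3,3}(E)\subset S_{3,3,1}(E)\otimes S_2(E)$ has multiplicity one, so $\alpha$, $\phi$, $\beta$ exist and are unique up to scalar once extended $A$-linearly using $A_1 = S_2(E)$. To prove the compositions $\alpha\circ\phi$ and $\phi\circ\beta$ vanish, I would pass through $\textup{Sym}^2 S_2(E) = S_4(E)\oplus S_{2,2}(E)$ and apply Pieri plus a direct Littlewood--Richardson check: $S_{3,3,1}(E)$ is absent from both $S_3(E)\otimes S_4(E)$ (only two-row partitions arise) and $S_3(E)\otimes S_{2,2}(E)$, and similarly $S_{3,3,3}(E)$ is absent from $S_{3,2}(E)\otimes S_4(E)$ and $S_{3,2}(E)\otimes S_{2,2}(E)$. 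Schur's lemma then forces both compositions to be zero.

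For exactness I would rely on the \texttt{PieriMaps}/\texttt{Macaulay2} computation already sketched, which returns a minimal free resolution with Betti numbers $20, 60, 60, 20$. These equal the Weyl dimensions of $S_3(E)$, $S_{3,2}(E)$, $S_{3,3,1}(E)$, $S_{3,3,3}(E)$ in dimension four, so every differential of the computed resolution is $\textup{GL}(E)$-equivariant with fixed Schur type, and uniqueness of the Pieri projections identifies the computed resolution with $F_\bullet$. Given a length-$3$ linear resolution, Auslander--Buchsbaum yields $\textup{depth}(M) = 7$, so $M$ is Cohen--Macaulay of codimension $3$; its support is exactly $X^s_{4,2}$ because the ideal of $(3\times 3)$-minors lies in $\textup{ann}(M)$, which can be certified by localizing at any smooth point of $X^s_{4,2}$. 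The Ulrich pattern $\beta_i = \binom{3}{i}\cdot 20$ is then immediate, and the rank count gives $\beta_0/\deg(X^s_{4,2}) = 20/10 = 2$, matching Fact \ref{fact:UlrichRank}.

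Self-duality follows by combining $S_\lambda(E)^*\cong S_{-\lambda_4,\ldots,-\lambda_1}(E)$ with the twist identity $S_\lambda(E)\otimes(\wedge^4 E)^{\otimes 3}\cong S_{\lambda+3\cdot\bfone}(E)$. Direct computation gives
\[ S_3(E)^*\otimes\det(E)^3 \,\cong\, S_{3,3,3}(E), \qquad S_{3,2}(E)^*\otimes\det(E)^3 \,\cong\, S_{3,3,1}(E), \]
so after reversing homological order and twisting by $\det(E)^3$, the dual complex $\textup{Hom}_A(F_\bullet, A)$ involves exactly the same sequence of Schur modules as $F_\bullet$. Uniqueness of the Pieri maps then matches the dualized complex with $F_\bullet$ up to the indicated twist and shift.

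The main obstacle is an intrinsic proof of exactness: my plan substitutes the verified \texttt{Macaulay2} computation, but a fully conceptual argument would require either Weyman's geometric method (realizing $F_\bullet$ as a pushforward from a resolution of singularities of $X^s_{4,2}$) or the Sam--Weyman theory of Pieri resolutions to deduce exactness from the representation-theoretic data alone.
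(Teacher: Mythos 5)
Your approach is substantively the same as the paper's: define $\alpha,\phi,\beta$ as the unique Pieri projections, show the compositions vanish by decomposing $S_2(S_2(E)) = S_4(E)\oplus S_{2,2}(E)$ and applying Pieri/Littlewood--Richardson plus Schur's lemma, use the \texttt{Macaulay2} Betti number computation to get exactness and then identify the equivariant terms by dimension count, and finally obtain self-duality by dualizing and twisting by $\det(E)^{3}$ (plus $A(7)$). The paper also relies on the machine computation of Betti numbers at the same point, so your candor about substituting computation for a conceptual exactness argument matches what the authors actually do.

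The one step that does not hold up as written is the identification of the support. You assert that the ideal of $3\times 3$ minors lies in $\operatorname{ann}(M)$ and that this ``can be certified by localizing at any smooth point of $X^s_{4,2}$.'' Localizing at a point \emph{on} $X^s_{4,2}$ cannot certify that the minors annihilate $M$, since they already vanish at such a point; what you need is that $M$ vanishes on the complement, i.e.\ at primes of rank $\geq 3$, together with nonvanishing at the generic point of $X^s_{4,2}$. Even then, knowing $\operatorname{supp}(M)\subseteq X^s_{4,2}$ plus $\operatorname{codim} = 3$ gives equality only because $X^s_{4,2}$ is irreducible of codimension $3$, so you should say so. The paper's argument is cleaner and worth adopting: $M$ is a cokernel of a $\mathrm{GL}(E)$-equivariant map, hence $\operatorname{supp}(M)$ is a union of $\mathrm{GL}(E)$-orbit closures in $X^s_4$; since it has codimension $3$ and the only orbit of that codimension is $X^s_{4,2}\setminus X^s_{4,3}$, the support must equal $X^s_{4,2}$. (Alternatively, one can evaluate $\alpha$ at a diagonal matrix of each rank $r=0,\dots,4$ and observe that the cokernel is nonzero exactly for $r\leq 2$.) With this repair, your proof matches the paper's.
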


\begin{proof} 
Since $M$ is the cokernel of a $\textup{GL}(E)$-map, 
it is $\textup{GL}(E)$-equivariant. 
So, the support of $M$ is a union of 
orbits. By Definition \ref{def:UlrichUlrich}(2),
$M$ is supported in codimension $3$.
Since the only orbit of codimension $3$ is $X^s_{4,2}\backslash X^s_{4,3}$,
the support of $M$ is the closure of this orbit, which is
$X^s_{4,2}$. It can also easily be checked with \texttt{Macaulay2}, by
restricting $\alpha$ to diagonal matrices of rank $r$ for $r = 0,\ldots, 4$,
that $M$ is supported on the strata $X^s_{4,r}$ where $r \leq 2$. 
Also, the statement that the rank of $M$ equals 2 is now immediate from
Fact \ref{fact:UlrichRank}.

Now we prove that the $\textup{GL}(E)$-equivariant minimal free resolution of $M$ 
is $F_\dt$ as above. 
By Pieri's rule there is a $\textup{GL}(E)$-map unique up to nonzero scalar:
\[ S_{3,2}(E) \te S_2(E) \vmto{} S_{3,3,1}(E) \]
and a $\textup{GL}(E)$-map unique up to nonzero scalar:
\[ S_{3,3,1}(E) \te S_2(E) \vmto{} S_{3,3,3}(E). \]
These are the maps $\phi$ and $\beta$ in $F_\dt$ respectively. The composition
$\alpha \circ \phi$ maps $S_{3,3,1}(E)$ to a submodule of 
$S_3(E) \te S_2(S_2(E))$. By \cite[Proposition 2.3.8]{Wey} 
the latter double symmetric power
equals $S_4(E) \oplus S_{2,2}(E)$, and so this tensor product decomposes as:
\[ S_3(E) \te S_4(E) \, \bigoplus \, S_3(E) \te S_{2,2}(E). \]
By Pieri's rule, none of these summands contains
$S_{3,3,1}(E)$. Hence $\alpha \circ \phi$ is zero by Schur's lemma.
The same type of argument shows that $\phi \circ \beta$ is zero. Thus
$F_\dt$ is a complex. 

By our \texttt{Macaulay2} computation of Betti numbers before the Proposition, 
$\ker(\alpha)$ is generated in degree $2$ by $60$ minimal generators. In $F_\dt$
these must be the image of $S_{3,3,1}(E)$, since that is $60$-dimensional by the hook 
content formula and it maps injectively to $F_1$.  So $F_{\dt}$ 
is exact at $F_{1}$.
Now again by the \texttt{Macaulay2} computation, it follows that
$\ker \phi$ is generated in degree $3$ by $20$ generators.  These
must be the image of $S_{3,3,3}(E)$ since that is $20$-dimensional
and maps injectively to $F_2$.  So $F_{\dt}$ is
exact at $F_{2}$.  Finally, the computation implies that
$\beta$ is injective, and $F_\dt$ is the $\textup{GL}(E)$-equivariant minimal free resolution of
$M$.

For the statement about the dual, recall that since $F_{\dt}$ is a resolution of a Cohen-Macaulay module, 
the dual complex, obtained by applying $\Hom_{A}(-, \omega_A)$ with $\omega_A = A(-10)$, 
is also a resolution. 
If we twist this dual resolution with $(\wedge^4 E)^{\te 3} \te A(7)$, the 
terms will be as in the original resolution. Since the nonzero $\textup{GL}(E)$-map 
$\alpha$ is uniquely determined up to scale, it follows that $F_{\dt}$
and its dual are isomorphic up to twist.
\end{proof}

\begin{remark}
The $\textup{GL}(E)$-representations in this resolution could also 
have been computed using the \texttt{Macaulay2} package \texttt{HighestWeights} \cite{Ga}.
\end{remark}

\begin{remark} \label{rem:anton}
The dual of this resolution is:
\begin{equation} \label{eq:DualUlrich}
S_{3,3,3}(E^*) \te A \leftarrow S_{3,3,1} \te A(-1) \leftarrow
S_{3,2}(E^*) \te A(-2) \leftarrow S_{3}(E^*) \te A(-3).
\end{equation}
A symmetric form $q$ in $S_2(E^*)$ corresponds to a point in $\textup{Spec}(A)$
and a homomorphism $A \rightarrow {\mathbb C}$. The fiber of this complex
over the point $q$ is then an $\textup{SO}(E^*, q)$-complex:
\begin{equation} \label{eq:LW} S_{3,3,3}(E^*) \leftarrow S_{3,3,1} \leftarrow
S_{3,2}(E^*) \leftarrow S_{3}(E^*).
\end{equation}
When $q$ is a nondegenerate form, this is the {\it Littlewood complex}
$L^{3,3,3}_\bullet$ as defined in \cite[\S 4.2]{SSW}. (The terms
of $L^{3,3,3}$ can be computed using the plethysm in \S 4.6 of loc.cit.)
This partition $\lambda = (3,3,3)$ is not admissible since $3+3 > 4$,
see Sec.4.1 loc.cit. The cohomology of \eqref{eq:LW} is then
given by Theorem 4.4 in loc.cit. and it vanishes (since here $i_4(\lambda) = 
\infty$), as it should in agreement with Proposition \ref{pro:UlrichFirst}.
The dual resolution \eqref{eq:DualUlrich} of the Ulrich sheaf can then 
be thought of as a ``universal" Littlewood complex for the parition $\lambda
= (3,3,3)$.  In other cases when Littlewood complexes
are exact, it would be an interesting future research topic to investigate the sheaf that
is resolved by the ``universal Littlewood complex".
\end{remark}

To obtain nicer
formulas for the Chow form of the essential variety $\mathcal{E}_{\C}$ 
in \S \ref{sec:Chow}, 
we now prove that the middle map $\phi$ in the resolution \textup{(\ref{eq:UlrichRes})} is symmetric,
in the following appropriate sense. 
In general, suppose that we are given a linear map $W^* \mto{\mu} W \te L^*$
where $L$ is a finite dimensional vector space. Dualizing, we get a map
$W \vmto{\mu^{T}} W^* \te L$ which in turn gives a map
$W \te L^* \vmto{\nu} W^*$. By definition, the map $\mu$ is {\it symmetric} if 
$\mu = \nu$ and {\it skew-symmetric} if $\mu = - \nu$.  If $\mu$ is symmetric and $\mu$ is represented as a matrix 
with entries in $L^*$ with respect to dual bases of $W$ and $W^*$, then that matrix is symmetric, and analogously 
when $\mu$ is skew-symmetric.  Note that the map $\mu$ also induces a map $L  \mto{\eta} W \te W$. 

\begin{fact}\label{fact:sym} The map $\mu$
is symmetric if the image of $\eta$ is in the subspace 
$S_{2}(W) \subseteq W \te W$ and it is skew-symmetric if the image is in the subspace
$\wedge^2 W \subseteq W \te W$. 
\end{fact}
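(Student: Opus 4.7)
The plan is to identify all of $\mu$, $\mu^T$, $\nu$, and $\eta$ with a single tensor $\tau \in W \otimes W \otimes L^*$ via canonical adjunctions, and to read off the symmetry/skew-symmetry condition directly from whether $\tau$ is fixed (respectively negated) by the swap of the two $W$-factors.

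First, I would invoke the natural isomorphisms
\[ \Hom(W^*, W \otimes L^*) \,\,\cong\,\, W \otimes W \otimes L^* \,\,\cong\,\, \Hom(L,\, W \otimes W), \]
which send $\mu \leftrightarrow \tau \leftrightarrow \eta$. Under the second isomorphism it is immediate from the definition of $\eta$ that the image of $\eta$ lies in $S_2(W)$ (respectively in $\wedge^2 W$) if and only if $\tau$ lies in $S_2(W) \otimes L^* \subseteq W \otimes W \otimes L^*$ (respectively in $\wedge^2 W \otimes L^*$).

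Next, I would trace the construction of $\nu$ through these adjunctions: dualizing $\mu: W^* \to W \otimes L^*$ gives $\mu^T: W^* \otimes L \to W$, and then Hom–tensor adjunction yields $\nu: W^* \to W \otimes L^*$. Both $\mu$ and $\nu$ correspond to tensors in $W \otimes W \otimes L^*$, and a quick bookkeeping in dual bases (writing $\tau = \sum m_{ik}^{j}\, e_i \otimes e_k \otimes f_j$, one checks $\mu(e_i^*) = \sum m_{ik}^{j}\, e_k \otimes f_j$ whereas $\nu(e_i^*) = \sum m_{ki}^{j}\, e_k \otimes f_j$) shows that $\nu$ corresponds to $(\sigma \otimes \text{id}_{L^*})(\tau)$, where $\sigma$ is the swap of the two $W$-factors. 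More abstractly, this is because the composite of transpose and adjunction is precisely the swap.

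With this identification in hand, $\mu = \nu$ is equivalent to $\tau$ being fixed by the swap, i.e.\ $\tau \in S_2(W) \otimes L^*$, which by the first step is equivalent to $\text{Im}(\eta) \subseteq S_2(W)$; analogously $\mu = -\nu$ is equivalent to $\tau \in \wedge^2 W \otimes L^*$, equivalent to $\text{Im}(\eta) \subseteq \wedge^2 W$. There is no real obstacle beyond keeping the adjunction conventions straight; once $\mu^T$ and $\nu$ are recognized as two ways of re-encoding the same tensor with the $W$-factors swapped, the Fact is essentially a restatement of the definitions.
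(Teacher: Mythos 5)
Your proof is correct, and there is nothing in the paper to compare it against: the paper states this result as a \emph{Fact} without argument, treating it as a routine consequence of the definitions. Your argument — identifying $\mu$, $\nu$, and $\eta$ with a single tensor $\tau \in W \otimes W \otimes L^*$ via the canonical adjunctions, and observing that $\nu$ corresponds to $(\sigma \otimes \mathrm{id}_{L^*})(\tau)$ where $\sigma$ swaps the two $W$-factors — is precisely the natural unwinding the authors presumably had in mind, and your coordinate check ($\mu(e_i^*) = \sum m_{ik}^j e_k \otimes f_j$ versus $\nu(e_i^*) = \sum m_{ki}^j e_k \otimes f_j$) confirms it. One small remark: you actually establish the biconditional, which is slightly stronger than the one-directional ``if'' in the statement; the paper only uses the stated direction in Propositions \ref{symM} and \ref{symN}, but the converse comes for free from your identification.
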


\begin{prop}\label{symM}
The middle map $\phi$ in the resolution \textup{(\ref{eq:UlrichRes})} is symmetric.
\end{prop}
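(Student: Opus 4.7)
The plan is to combine Fact \ref{fact:sym} with the self-duality of the resolution proved in Proposition \ref{pro:UlrichFirst} and a multiplicity-one argument. By Pieri's rule, the irreducible $S_{3,3,1}(E)$ appears with multiplicity exactly one in $S_{3,2}(E)\otimes S_2(E)$ (among the horizontal strips of size two added to $(3,2)$, only one gives $(3,3,1)$), so the degree-one component
\[
\phi^{(1)}\colon S_{3,2}(E)\otimes S_2(E)\longrightarrow S_{3,3,1}(E)
\]
of the middle differential is uniquely determined up to a nonzero scalar as a $\textup{GL}(E)$-equivariant map.

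The self-duality of $F_\bullet$ is encoded by the canonical $\textup{GL}(E)$-isomorphism $S_{3,3,1}(E)\otimes(\wedge^{4}E)^{\otimes -3}\cong S_{3,2}(E)^{*}$. Using this identification together with the standard adjunctions, $\phi^{(1)}$ corresponds to a $\textup{GL}(E)$-equivariant map
\[
\eta\colon S_2(E)\otimes(\wedge^{4}E)^{\otimes -3}\longrightarrow S_{3,2}(E)^{*}\otimes S_{3,2}(E)^{*}.
\]
By Fact \ref{fact:sym} (applied with $W=S_{3,2}(E)^{*}$ and $L=S_2(E)\otimes(\wedge^{4}E)^{\otimes -3}$), the map $\phi$ is symmetric if and only if $\mathrm{Im}(\eta)\subseteq S^{2}(S_{3,2}(E)^{*})$, and skew-symmetric if and only if $\mathrm{Im}(\eta)\subseteq\wedge^{2}(S_{3,2}(E)^{*})$. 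Since $\eta$ is unique up to scale by the multiplicity-one statement above, the swap-of-factors involution $\tau$ on $S_{3,2}(E)^{*}\otimes S_{3,2}(E)^{*}$ must act on $\eta$ by a scalar $c\in\{+1,-1\}$, and the claim reduces to proving $c=+1$.

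The remaining and main obstacle is fixing this sign. I would do this in one of two ways. First, by a direct highest-weight computation: lift a highest-weight vector of $S_{3,3,1}(E)$ to an explicit element of $S_{3,2}(E)\otimes S_2(E)$ using semistandard tableaux, then compare it with the result of applying the swap after the identification $S_{3,3,1}(E)\cong S_{3,2}(E)^{*}\otimes(\wedge^{4}E)^{\otimes 3}$. Alternatively (and in line with the authors' acknowledgement of Steven Sam), one can compute the plethysm decompositions of $S^{2}(S_{3,2}(E)^{*})$ and $\wedge^{2}(S_{3,2}(E)^{*})$ with a package such as \texttt{SchurRings} or \texttt{PieriMaps} in \texttt{Macaulay2} and verify that the irreducible summand of the appropriate highest weight (corresponding to $S_{2}(E)\otimes(\wedge^{4}E)^{\otimes -3}$) sits in the symmetric square and not in the exterior square. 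Either method produces $c=+1$ and hence the symmetry of $\phi$.
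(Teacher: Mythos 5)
Your proposal is correct and takes essentially the same approach as the paper: both arguments combine Fact \ref{fact:sym} with a multiplicity-one statement (your Pieri computation of $S_{3,3,1}(E)$ in $S_{3,2}(E)\otimes S_2(E)$ is equivalent, by adjunction and the twist $S_{3,3,1}(E)\cong S_{3,2}(E)^*\otimes(\wedge^4 E)^{\otimes 3}$, to the paper's Littlewood--Richardson statement that $S_{3,3,3,1}(E)$ occurs once in $S_{3,2}(E)^{\otimes 2}$), and then settle the remaining sign by a plethysm computation locating the relevant irreducible in the symmetric rather than the exterior square. The paper carries this last step out via Carr\'e--Leclerc's Corollary 5.2 or \texttt{SchurRings}, exactly your second alternative.
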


\begin{proof}
Consider the map $\phi$ in degree $3$. It is:
\[ S_{3,2}(E) \te S_2(E) \vmto{} S_{3,3,1}(E) \iso S_{3,2}(E)^* 
\te (\wedge^4 E)^{\te 3} \]
and it induces the map:
\[ S_{3,2}(E) \te S_{3,2}(E) \vmto{} S_2(E)^* \te (\wedge^4 E)^{\te 3} 
\iso S_{3,3,3,1}(E). \]
By the Littlewood-Richardson rule, the right representation
above occurs with multiplicity $1$ in the left side.
Now one can check that $S_{3,3,3,1}(E)$ occurs in $S_2(S_{3,2}(E))$.
This follows by Corollary 5.2 in \cite{BeLec} or one can use 
the package \texttt{SchurRings} \cite{MaSchur} in \texttt{Macaulay2}: 

\begin{verbatim}
needsPackage "SchurRings"
S = schurRing(s,4,GroupActing=>"GL")
plethysm(s_2,s_{3,2}) 
\end{verbatim}

\noindent Due to Fact \ref{fact:sym}, we can conclude that the map $\phi$ is symmetric.
\end{proof}

\subsection{The second Ulrich sheaf}\label{subsec:secondUlrich}
We construct another Ulrich sheaf on $PX^s_{4,2}$ and analyze it similarly to as above.
This will lead to a second formula for $\text{Ch}(\mathcal{E}_{\C})$ in 
\S \ref{sec:Chow}.
Consider $S_{2,2,1}(E) \te S_2(E)$. By Pieri's rule:
\[ S_{2,2,1}(E) \te S_2(E) \, \iso \, S_{4,2,1}(E) \oplus S_{3,2,2}(E) \oplus S_{3,2,1,1}(E) 
\oplus S_{2,2,2,1}(E). \]
Thus there is a $\textup{GL}(E)$-map, with nonzero degree 1 components unique up to scale: 
\[ S_{2,2,1}(E) \te A \lmto{\alpha} (S_{3,2,2}(E) \oplus S_{3,2,1,1}(E)
\oplus S_{2,2,2,1}(E)) \te A(-1). \]

\noindent This map can be programmed in \texttt{Macaulay2} using \texttt{PieriMaps} as follows:

\begin{verbatim}
R=QQ[a..d]
needsPackage "PieriMaps"
f1= transpose pieri({3,2,2,0},{1,3},R)
f2=transpose pieri({3,2,1,1},{1,4},R)   
f3=transpose pieri({2,2,2,1},{3,4},R) 
f = transpose (f1||f2||f3)
S=QQ[a..d,y_0..y_9]
a2=symmetricPower(2,matrix{{a..d}})
alpha=sum(10,i->contract(a2_(0,i),sub(f,S))*y_i)
\end{verbatim}

\noindent We can then compute the resolution of $\textup{coker}(\alpha)$ in \texttt{Macaulay2}. It has the form:
\[ A^{20} \lmto{\alpha} A(-1)^{60} \vmto{} A(-2)^{60} \vmto{} A(-3)^{20}. \]
Thus the cokernel of $\alpha$ is an Ulrich module, and moreover we have: 

\begin{prop} \label{pro:UlrichSecond}
The cokernel of $\alpha$ is an Ulrich module $M$ of rank $2$ 
supported on the variety $X^s_{4,2}$. The resolution of $M$
is $\textup{GL}(E)$-equivariant and it is:
\begin{align} 
& F_\dt :   S_{2,2,1}(E) \te A & & 
\lmto{\alpha} (S_{3,2,2}(E) \oplus S_{3,2,1,1}(E) \oplus S_{2,2,2,1}(E)) 
\te A(-1) \notag\\
& & & \lmto{\phi} (S_{4,2,2,1}(E) \oplus S_{3,3,2,1}(E) \oplus S_{3,2,2,2}(E))
\te A(-2)  \label{eq:UlrichRes2}\\
& & &  \lmto{\beta} S_{4,3,2,2}(E) \te A(-3) \notag
\end{align}
with ranks $20,60,60,20$.
The dual complex of this resolution is also a resolution and
these two resolutions are isomorphic up to twist.  We can visualize the 
resolution by:
\[
0 \,\, \gets \,\, M \,\, \vmto{} \,\, \tiny \tableau[scY]{ & \\ & \\ \\} 
\,\,  \gets \,\, \tiny \tableau[scY]{ & & \\ & \\ &} \, \oplus \, \tiny \tableau[scY]{ & & \\ & \\ \\ \\} 
\, \oplus \, \tiny \tableau[scY]{ & \\ & \\ & \\ \\}
\,\, \gets \,\, \tiny \tableau[scY]{ & & & \\ & \\ & \\ \\} \, \oplus \, \tiny \tableau[scY]{ & & \\ & & \\ & \\ \\} 
\, \oplus \, \tiny \tableau[scY]{ & & \\ & \\ & \\ &}
\,\, \gets \,\, \tiny \tableau[scY]{ & & & \\ & & \\ & \\ &}
\,\, \gets \,\, 0.
\]
\end{prop}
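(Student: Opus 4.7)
My plan is to imitate the proof of Proposition \ref{pro:UlrichFirst} almost verbatim, adapting the representation-theoretic bookkeeping to the partitions appearing in \eqref{eq:UlrichRes2}. First I would argue equivariance and support: since $M$ is the cokernel of a $\textup{GL}(E)$-equivariant map, $M$ is $\textup{GL}(E)$-equivariant, so its support in $X^s_4$ is a union of orbit closures. By Definition \ref{def:UlrichUlrich}(2) together with the \texttt{Macaulay2} Betti computation displayed before the Proposition, the support has codimension $3$, and the only orbit closure of codimension $3$ under the $\textup{GL}(E)$-action on $X^s_4$ is $X^s_{4,2}$; a direct verification on diagonal matrices of each rank confirms this. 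The rank of $M$ over $X^s_{4,2}$ is then $\beta_0/\deg(X^s_{4,2}) = 20/10 = 2$ by Fact \ref{fact:UlrichRank}.

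Next I would identify the higher syzygies as the claimed Schur modules. Using Pieri's rule one checks that each adjacent pair of terms in \eqref{eq:UlrichRes2} admits a $\textup{GL}(E)$-equivariant degree-one map with the multiplicity of the target partition in (source) $\otimes S_2(E)$ equal to $1$, so the differentials $\phi$ and $\beta$ are unique up to nonzero scalar and can be normalized to be nonzero by \texttt{PieriMaps}. To see that $F_\dt$ is a complex, I would show $\alpha\circ\phi=0$ and $\phi\circ\beta=0$ by Schur's lemma. Concretely, $\alpha\circ\phi$ factors through $S_{2,2,1}(E)\otimes S_2(S_2(E))$; expanding $S_2(S_2(E))=S_4(E)\oplus S_{2,2}(E)$ and applying the Littlewood-Richardson rule to the two pieces, I expect to find that none of the three summands $S_{3,2,2}(E)$, $S_{3,2,1,1}(E)$, $S_{2,2,2,1}(E)$ appear (this is a routine but slightly bulky check, probably the main technical obstacle, and it is the analogue of the argument using $S_3(E)\otimes S_2(S_2(E))$ in the proof of Proposition \ref{pro:UlrichFirst}). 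The same strategy works for $\phi\circ\beta=0$, which factors through the middle tensored with $S_2(S_2(E))$ and must then hit $S_{4,3,2,2}(E)$.

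Given that $F_\dt$ is a complex of the correct shape, exactness is forced by the \texttt{Macaulay2} computation: the kernels of $\alpha$ and $\phi$ are generated in the predicted degrees by the correct number of generators, and the hook content formula shows $\dim S_{3,2,2}(E)+\dim S_{3,2,1,1}(E)+\dim S_{2,2,2,1}(E)=60$, $\dim S_{4,2,2,1}(E)+\dim S_{3,3,2,1}(E)+\dim S_{3,2,2,2}(E)=60$, and $\dim S_{4,3,2,2}(E)=20$. Hence each irreducible Schur summand in the proposed $F_i$ maps injectively into the minimal generators of the appropriate syzygy module, and a dimension count forces these to be all the generators; thus $F_\dt$ is the $\textup{GL}(E)$-equivariant minimal free resolution of $M$.

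Finally, for the duality statement, apply $\Hom_A(-,\omega_A)$ with $\omega_A=A(-10)$. Since $M$ is Cohen-Macaulay (being Ulrich of codimension $3$), the dual complex is again a resolution. After twisting by $(\wedge^4 E)^{\otimes 3}\otimes A(7)$, one verifies using the identity $S_\lambda(E)^*\otimes(\wedge^4 E)^{\otimes 3}\iso S_{3-\lambda_4,3-\lambda_3,3-\lambda_2,3-\lambda_1}(E)$ that each term of the twisted dual is isomorphic to the corresponding term of $F_\dt$; concretely, $(4,3,2,2)$ maps to $(1,1,0,-1)+3\cdot\bfone=(4,4,3,2)$... wait, I should instead note that the uniqueness (up to scalar) of $\alpha$ as a nonzero $\textup{GL}(E)$-map forces the twisted dual resolution to be isomorphic to $F_\dt$ itself, exactly as in the last paragraph of the proof of Proposition \ref{pro:UlrichFirst}.
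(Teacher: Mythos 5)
Your plan to transplant the Schur's--lemma argument from Proposition~\ref{pro:UlrichFirst} breaks down, and the paper's own proof explicitly opens with the warning that the identification of the resolution is done ``differently than in \ref{pro:UlrichFirst}.'' The key obstruction: when you expand $S_{2,2,1}(E)\otimes S_2(S_2(E)) \iso S_{2,2,1}(E)\otimes S_4(E) \oplus S_{2,2,1}(E)\otimes S_{2,2}(E)$, the relevant degree-$2$ representations from $F_2$ (you would need to check $S_{4,2,2,1}(E)$, $S_{3,3,2,1}(E)$, $S_{3,2,2,2}(E)$, not the three degree-$5$ modules you listed, which cannot appear in a degree-$9$ tensor product anyway) do \emph{not} all vanish there. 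In fact $S_{4,2,2,1}(E)$ appears with multiplicity $2$ in $S_{2,2,1}(E)\otimes S_2(S_2(E))$; this nonvanishing is precisely the starting point of the paper's argument, not a check that ``routinely'' succeeds. So Schur's lemma does not force $\alpha\circ\phi=0$, and the step you flag as the ``main technical obstacle'' in fact fails.

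What the paper does instead: rather than building a candidate complex and verifying it is one, it works backward from $\ker(\alpha)$. By comparing the multiplicity of $S_{4,2,2,1}(E)$ in $(F_1)_2$ (where it occurs three times, once per Pieri) with its multiplicity in $(F_0)_2$ (twice), one concludes it appears at least once among the degree-$2$ generators of $\ker(\alpha)$; the same goes for $S_{3,3,2,1}(E)$ and $S_{3,2,2,2}(E)$, and a dimension count (against the Macaulay2-computed $\beta_2=60$) pins each down to multiplicity exactly one. The final syzygy module $C$ is then identified via Cohen--Macaulay duality: since the dual complex is also a resolution, $C$ must map nontrivially to each summand of $F_2$, and among the two $20$-dimensional candidates in $S_{3,2,2,2}(E)\otimes S_2(E)$, only $S_{4,3,2,2}(E)$ also occurs in $S_{3,3,2,1}(E)\otimes S_2(E)$. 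Two further slips in your write-up: the differentials $\phi$ and $\beta$ are \emph{not} unique up to a single scalar (they are matrices with several independently-scalable nonzero components, as the proof of Proposition~\ref{symN} makes explicit); and the twist restoring the dual resolution to the shape of $F_\dt$ is $(\wedge^4 E)^{\otimes 4}\otimes A(7)$, not $(\wedge^4 E)^{\otimes 3}\otimes A(7)$, because $|(4,3,2,2)|=11$ rather than $9$.
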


\begin{proof} 
The argument concerning the support of $M$ is exactly as
in Proposition \ref{pro:UlrichFirst}.

Now we prove that the minimal free resolution of $M$ is of the form above, differently than 
in \ref{pro:UlrichFirst}. 
To start, note that the module $S_{4,2,2,1}(E)$ occurs by Pieri once in
each of:
\[ S_{3,2,2}(E) \te S_2(E), \quad S_{3,2,1,1}(E) \te S_2(E), \quad
S_{2,2,2,1}(E) \te S_2(E). \]
On the other hand, it occurs in: 
\[ S_{2,2,1}(E) \te S_2(S_2(E)) \iso S_{2,2,1}(E) \te S_4(E) \oplus
S_{2,2,1}(E) \te S_{2,2}(E) \]
only twice, as seen using Pieri's rule and the Littlewood-Richardson rule.
Thus $S_{4,2,2,1}(E)$ occurs at least once in the degree $2$ part of 
$\ker(\alpha)$. Similarly we see that each of $S_{3,3,2,1}(E)$ and
$S_{3,2,2,2}(E)$ occurs at least
once in $\ker(\alpha)$ in degree $2$. 
But by the \texttt{Macaulay2} computation before this Proposition, 
we know that 
$\ker(\alpha)$ is a module with $60$ generators in 
degree $2$. And the sum of the dimensions of these three representations 
is $60$. Hence each of them occurs exactly once in $\ker(\alpha)$ in
degree $2$, and they generate $\ker(\alpha)$. 

Now let $C$ be the $20$-dimensional vector space generating 
$\ker(\phi)$. Since the resolution of $M$ has length
equal to $\textup{codim}(M)$, the module $M$ is Cohen-Macaulay and
the dual of its resolution, obtained by applying $\Hom_A(-, \omega_A)$
where $\omega_A \iso A(-4)$,  is again a resolution of 
$\Ext_{A}^3(M, \omega_A)$.  Thus the map from $C \te A(-3)$ to
each of: 
\[ S_{4,2,2,1}(E) \te A(-2), \quad S_{3,3,2,1}(E) \te A(-2), \quad
S_{3,2,2,2}(E) \te A(-2) \]
is nonzero.  In particular $C$ maps nontrivially to: 
\[ S_{3,2,2,2}(E) \te S_2(E) \iso S_{5,2,2,2}(E) \oplus S_{4,3,2,2}(E). \]
Each of the right-hand side representations have dimension $20$, so one of them 
equals $C$. However only the last one occurs in $S_{3,3,2,1}(E) \te S_2(E)$,
and so $C \iso S_{4,3,2,2}(E)$. 
We have proven that the $\textup{GL}(E)$-equivariant minimal 
free resolution of $M$ indeed has the form $F_{\dt}$.

For the statement about the dual, recall that each of the three components
of $\alpha$ in degree 1 are nonzero.
Also, as the dual complex is a resolution,
here obtained by applying $\Hom_A(-, \omega_A)$ with $\omega_A = A(-10)$,
all three degree 1 components of $\beta$ are nonzero.
If we twist this dual resolution with $(\wedge^4 E)^{\te 4} \te A(7)$, the 
terms will be as in the original resolution. Because each of the three
nonzero components of the map 
$\alpha$ are uniquely determined up to scale, 
the resolution $F_{\dt}$
and its dual are isomorphic up to twist.
\end{proof}

\begin{remark}
Again the $\textup{GL}(E)$-representations in this resolution could 
have been computed using the \texttt{Macaulay2} package \texttt{HighestWeights}.
\end{remark}

\begin{prop}\label{symN}
The middle map $\phi$ in the resolution \textup{(\ref{eq:UlrichRes2})} is symmetric.
\end{prop}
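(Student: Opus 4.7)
The plan is to adapt the argument of Proposition \ref{symM} to the direct-sum setting. Write $W = V_1 \oplus V_2 \oplus V_3$ with $V_1 = S_{3,2,2}(E)$, $V_2 = S_{3,2,1,1}(E)$, $V_3 = S_{2,2,2,1}(E)$ and correspondingly $W' = V_1' \oplus V_2' \oplus V_3'$; a componentwise check confirms $V_j' \iso V_j^* \te (\wedge^4 E)^{\te 4}$ for each $j$, using $S_\lambda(E)^* \iso S_{-\lambda_n, \ldots, -\lambda_1}(E)$ and a determinantal twist. Under this pairing the degree-three piece of $\phi$ rewrites as a map $W \te S_2(E) \to W^* \te (\wedge^4 E)^{\te 4}$, and passing $S_2(E)$ across the arrow gives the induced $\textup{GL}(E)$-equivariant map
\[
\eta \, \colon \, W \te W \,\longrightarrow\, S_2(E)^* \te (\wedge^4 E)^{\te 4} \,\iso\, S_{4,4,4,2}(E).
\]
By Fact \ref{fact:sym}, the symmetry of $\phi$ is equivalent to $\eta$ factoring through $S_2(W) \subseteq W \te W$.

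Decomposing $\eta = \sum_{i,j} \eta_{ij}$ with $\eta_{ij} \colon V_i \te V_j \to S_{4,4,4,2}(E)$ and using $S_2(W) = \bigoplus_i S_2(V_i) \oplus \bigoplus_{i<j} V_i \te V_j$, the factorization breaks into a diagonal condition (each $\eta_{ii}$ must be symmetric on $V_i \te V_i$) and an off-diagonal condition (for each $i < j$, $\eta_{ij}$ must equal $\eta_{ji}$ composed with the swap $\tau$). For the diagonal, I would run exactly the \texttt{SchurRings} plethysm argument from Proposition \ref{symM}: verify for each $i$ that $S_{4,4,4,2}(E)$ does not appear in $\wedge^2(V_i)$, so any $\textup{GL}(E)$-equivariant map $V_i \te V_i \to S_{4,4,4,2}(E)$ is automatically symmetric. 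A pleasant simplification arises from a Pieri calculation of each $V_i \te S_2(E)$: it shows that $V_3'$ is not a summand of $V_2 \te S_2(E)$, nor is $V_2'$ a summand of $V_3 \te S_2(E)$, so $\phi_{23} = \phi_{32} = 0$, and the corresponding Littlewood--Richardson coefficient $c^{(4,4,4,2)}_{(3,2,1,1),(2,2,2,1)}$ vanishes; the off-diagonal check for the pair $(2,3)$ is therefore automatic.

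Only the pairs $(1,2)$ and $(1,3)$ require genuine off-diagonal verification. Each nonzero $\phi_{ij}$ is unique up to scalar because Pieri's rule gives $V_j'$ with multiplicity one in $V_i \te S_2(E)$, so the two ratios $c_{12} = \eta_{12}/(\eta_{21} \circ \tau)$ and $c_{13} = \eta_{13}/(\eta_{31} \circ \tau)$ are well-defined scalars. I would then invoke the self-duality $F_\dt \iso F_\dt^{\vee}$ (up to twist) established in Proposition \ref{pro:UlrichSecond}, which intertwines $\phi$ with its transpose and thereby fixes the relative scalings of the four nonzero off-diagonal components consistently; combined with the diagonal symmetry already established (which pins down the overall sign), this gives $c_{12} = c_{13} = 1$ after a suitable normalization of the three summands.

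The main obstacle is this off-diagonal matching: because $c^{(4,4,4,2)}_{(3,2,2),(3,2,1,1)}$ and $c^{(4,4,4,2)}_{(3,2,2),(2,2,2,1)}$ are both nonzero, the multiplicity of $S_{4,4,4,2}(E)$ in $\wedge^2(W)$ is strictly positive and the pure plethysm shortcut of Proposition \ref{symM} is unavailable. A clean representation-theoretic derivation from the self-duality intertwiner would be the elegant route; if that turns out to be delicate, one can always fall back on a direct verification in \texttt{Macaulay2} by computing $\phi$ via \texttt{PieriMaps}, identifying its Schur components via \texttt{HighestWeights}, and inspecting the resulting matrix against the pairing $W' \iso W^* \te (\wedge^4 E)^{\te 4}$.
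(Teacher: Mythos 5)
Your proposal is essentially the paper's proof: symmetry of each diagonal block $\phi_i$ by the plethysm test of Fact \ref{fact:sym}, vanishing of the $(2,3)$ and $(3,2)$ blocks by Pieri, and a diagonal rescaling of the three summands to turn the remaining off-diagonal pairs into genuine transposes of each other --- this is what your ``suitable normalization'' amounts to, and it works because a scalar multiple of a symmetric map is still symmetric, so the diagonal blocks survive the change of basis. The one place you leave slightly open --- that your ratios $c_{12}, c_{13}$ are well-defined nonzero scalars, i.e.\ that $\phi_{12} \neq 0$ if and only if $\phi_{21} \neq 0$ --- is exactly what the self-duality of the resolution from Proposition \ref{pro:UlrichSecond} supplies (combined with the multiplicity-one uniqueness from Pieri), so the ``elegant representation-theoretic route'' you hope for closes cleanly and no \texttt{Macaulay2} fallback is required.
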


\begin{proof}
We first show that the three `diagonal' components of $\phi$ in \textup{(\ref{eq:UlrichRes2})} are symmetric:
\begin{align*}
S_{3,2,2}(E) \te S_2(E) & \vmto{\phi_1}  S_{4,2,2,1}(E) & \\
S_{3,2,1,1}(E) \te S_2(E) & \vmto{\phi_2}  S_{3,3,2,1}(E) & \\
S_{2,2,2,1}(E) \te S_2(E) & \vmto{\phi_3}  S_{3,2,2,2}(E). & 
\end{align*}

\noindent Twisting the third component $\phi_3$ with $(\wedge^{4} E^{*})^{\te 2}$, it identifies as: 
\[ E^* \te S_2(E) \vmto{} E \]
and so $\phi_{3}$ is obviously symmetric.
Twisting the second map $\phi_2$ with $\wedge^4 E^{*}$ it identifies as:
\[ S_{2,1}(E) \te S_2(E) \vmto{} S_{2,2,1}(E) = (S_{2,1}(E)^*) \te 
(\wedge^4 E)^{\te 2}, \]
which induces the map:
\[ S_{2,1}(E) \te S_{2,1}(E) \vmto{} S_2(E)^* \te (\wedge^4 E)^{\te 2} 
= S_{2,2,2}(E). \]
By the Littlewood-Richardson rule, the left tensor product
contains $S_{2,2,2}(E)$ with multiplicity $1$. By Corollary 5.2 in
\cite{BeLec} or \texttt{SchurRings} in \texttt{Macaulay2},
this is in $S_2(S_{2,1}(E))$:

\begin{verbatim}
needsPackage "SchurRings"
S = schurRing(s,4,GroupActing=>"GL")
plethysm(s_2,s_{2,1})    
\end{verbatim}

\noindent So by Fact \ref{fact:sym}, the component $\phi_{2}$ is symmetric.  The first map $\phi_1$ may be identified as: 
\[ S_{3,2,2}(E) \te S_2(E) \vmto{} (S_{3,2,2}(E))^* \te (\wedge^4 E)^{\te 4}, \]
which induces the map:
\[ S_{3,2,2}(E) \te S_{3,2,2}(E) \vmto{} S_2(E)^* \te (\wedge^4 E)^{\te 4} =
S_{4,4,4,2}(E). \]
Again by Littlewood-Richardson, $S_{4,4,4,2}(E)$ is contained with 
multiplicity $1$ in the left side.
By Corollary 5.2 in
\cite{BeLec} or the package \texttt{SchurRings} in \texttt{Macaulay2},
this is in $S_2(S_{3,2,2}(E))$:

\begin{verbatim}
needsPackage "SchurRings"
S = schurRing(s,4,GroupActing=>"GL")
plethysm(s_2,s_{3,2,2})    
\end{verbatim}

It is now convenient to tensor the resolution (\ref{eq:UlrichRes2}) by $(\wedge^4 E^*)^{\te 2}$, and to let:
\[ T_1 = S_{1,0,0,-2}(E), \quad T_2 = S_{1,0,-1,-1}(E), \quad T_3 = 
S_{0,0,0,-1}(E). \]
We can then write the middle map as:
\begin{equation} \label{eq:UlrichRes2Mid}
T_1 \te A(1) \oplus T_2 \te A(1) \oplus T_3 \te A(1) 
\vmto{\footnotesize{\left ( \begin{matrix} \phi_1 & \mu_2 & \nu_2 \\
                     \mu_1 & \phi_2 & 0 \\
                     \nu_1 & 0 & \phi_3
      \end{matrix} \right) }}
T_1^* \te A(-1) \oplus T_2^* \te A(-1) \oplus T_3^* \te A(-1) 
\end{equation} 
Note indeed that the component: 
\[ S_{1,0,-1,-1}(E) \te S_2(E) = T_2 \te S_2(E) \vmto{}
T_3^* \iso S_{1}(E)
\] must be zero, since the left tensor product does not contain
$S_1(E)$ by Pieri's rule. Similarly the map
$T_3 \te S_2(E) \vmto{} T_2^*$ is zero.

We know the maps $\phi_1, \phi_2$ and $\phi_3$ are symmetric.
Consider:
\[ T_2 \te A(1) \vmto{\mu_1} T_1^* \te A(-1), \quad
T_1 \te A(1) \vmto{\mu_2} T_2^* \te A(-1). \]
Since the resolution (\ref{eq:UlrichRes2}) is
isomorphic to its dual, either both $\mu_1$ and $\mu_2$ are nonzero,
or they are both zero.
Suppose both are nonzero. The dual of $\mu_2$ is 
$T_2 \te A(1) \vmto{\mu_2^T} T_1^* \te A(-1)$. 
But such a $\textup{GL}(E)$-map is unique up to scalar, as is
easily seen by Pieri's rule.
Thus whatever the case we can say that $\mu_1 = c_\mu \mu_2^{T}$ 
for some nonzero scalar $c_\mu$. 
Similarly we get $\nu_1 = c_\nu \nu_{2}^{T}$. Composing the map
(\ref{eq:UlrichRes2Mid}) with the automorphism on its
right given by the block matrix:
\[ \left ( \begin{matrix} 1 & 0 & 0 \\
                0 & c_\mu & 0 \\
               0 & 0 & c_\nu 
   \end{matrix} \right ),
\] we get a middle map:
\begin{equation*} 
T_1 \te A(1) \oplus T_2 \te A(1) \oplus T_3 \te A(1) 
\vmto{\footnotesize{\left (\begin{matrix} \phi_1 & \mu_2^\prime & \nu_2^\prime \\
                     \mu_1 & \phi_2^\prime & 0 \\
                     \nu_1 & 0 & \phi_3^\prime
      \end{matrix} \right )}}
T_1^* \te A(-1) \oplus T_2^* \te A(-1) \oplus T_3^* \te A(-1) 
\end{equation*} 
where the diagonal maps are still symmetric, and 
$\mu_1 = (\mu_2^\prime)^T$ and $\nu_1 = (\nu_2^\prime)^T$. 
So we get a symmetric map, and the result about $\phi$ follows.
\end{proof}

This second Ulrich module constructed above in Proposition \ref{pro:UlrichSecond}
is a particular instance of a general 
construction of Ulrich modules on the variety of symmetric $n \times n$
matrices of rank $\leq r$; see \cite{Wey}, \S 6.3 and Exercise 34 in \S 6. 
We briefly
recall the general construction.
Let $W = \C^{n}$ and $G$ be
the Grassmannian $\textup{Gr}(n-r,W)$ of $(n-r)$-dimensional subspaces of $W$.
There is a tautological exact sequence of algebraic vector bundles on $G$:
\[ 0 \rightarrow \cK \rightarrow W \te \cO_G \rightarrow \cQ \rightarrow 0, \]
where $r$ is the rank of $\cQ$.
Let $X = X^s_n$ be the affine space of symmetric $n \times n$ matrices, and 
define $Z$ to be the incidence subvariety of $X \times G$ given by:
\[ Z = \{ ((W \mto{\phi} W),(\C^{n-r} \overset{i}{\hookrightarrow} W)) 
\in X \times G \, | \, \phi \circ i = 0 \}. \]
The  variety $Z$ is the affine geometric bundle $\VV_G(S_2(\cQ))$ 
of the locally free sheaf 
$S_2 (\cQ)$ on the Grassmannian $G$. There is a commutative
diagram: 
\[ \begin{CD}
Z @>>> X \times G \\
@VVV @VVV \\
X^s_{n,r} @>>> X
\end{CD} \]
in which $Z$ is a desingularization of $X^s_{n,r}$.
For any locally free sheaf $\cE$, the Schur functor $S_\lambda$
applies to give a new locally free sheaf $S_\lambda(\cE)$. 
Consider then the locally free sheaf:
\[ \cE(n,r) = S_{(n-r)^r}(\cQ) \te S_{n-r-1,n-r-2, \cdots, 1,0}(\cK) \]
on the Grassmannian $\textup{Gr}(n-r,W)$. Note that $S_{(n-r)^r}(\cQ)=\left(\det(\cQ)\right)^{n-r}$ is a line bundle and $\cE(n,r)$ is a locally free sheaf of 
rank $2^{\binom{n-r}{2}}$.
Let $Z \mto{p} G$ be the projection map. By pullback we get the locally 
free sheaf 
$p^* (\cE(n,r))$ on $Z$. The pushforward of this locally free sheaf down
to $X^s_{n,r}$ is an Ulrich sheaf on this variety. Since $X^s_{n,r}$ is
affine this corresponds to the module of global sections $H^{0}(Z,p^* \cE)$. The Ulrich
module in Proposition \ref{pro:UlrichSecond} is that module when $n = 4$
and $r = 2$.  For our computational purposes realized in \S \ref{sec:Chow}, we worked 
out the equivariant minimal free resolution as above.  
Interestingly, we do not know yet whether
the `simpler' Ulrich sheaf presented in \S \ref{subsec:firstUlrich}, which is new to our knowledge,
generalizes to a construction for other varieties.

\section{The Chow form of the essential variety}
\label{sec:Chow}

\subsection{Grassmannians and Chow divisors}
The Grassmannian variety $\textup{Gr}(c,n+1) = \textup{Gr}(\PP^{c-1}, \PP^{n})$ parametrizes the linear subspaces
of dimension $c-1$ in $\PP^n$, i.e the $\PP^{c-1}$'s in $\PP^n$. 
Such a linear subspace may be
given as the rowspace of a $c \times (n+1)$ matrix.
The tuple of maximal minors of this matrix is uniquely determined
by the linear subspace up to scale.  The number of such minors
is $\binom{n+1}{c}$. Hence we get a well-defined point in the projective space
$\PP^{\binom{n+1}{c} - 1}$. This defines an embedding of the
Grassmannian $\textup{Gr}(c,n+1)$ into that projective space, called the
Pl\"ucker embedding. Somewhat more algebraically, let $W$ be a vector space
of dimension $n+1$ and let $\PP(W)$ be
the space of lines in $W$ through the origin.  
Then a linear subspace $V$ of dimension $c$ in $W$ 
defines a line $\wedge^{c} V$ in $\wedge^{c} W$, and 
so it defines a point in $\PP(\wedge^{c} W) = \PP^{\binom{n+1}{c} - 1}$.
Thus the Grassmannian $\textup{Gr}(c,W)$ embeds into $\PP(\wedge^{c}W)$.
 
If $X$ is a variety of codimension $c$ in a projective space $\PP^n$, then
a linear subspace of dimension $c-1$ will typically not intersect $X$.
The set of points in the Grassmannian $\textup{Gr}(c,n+1)$ that do have
nonempty intersection with $X$ forms a divisor in $\textup{Gr}(c,n+1)$,
called the {\it Chow divisor}. The divisor
class group of $\textup{Gr}(c,n+1)$ is isomorphic to $\ZZ$. Considering the Pl\"ucker
embedding  $\textup{Gr}(c,n+1) \subseteq \PP^{\binom{n+1}{c} - 1}$, any hyperplane
in the latter projective space intersects the Grassmannian in a
divisor which generates the divisor class group of $\textup{Gr}(c,n+1)$. 
The homogeneous coordinate ring of this projective space 
$ \PP^{\binom{n+1}{c} - 1} = \PP(\wedge^{c} W)$
is $\Sym(\wedge^{c} W^*)$.  Note that here $\wedge^c W^*$ are the 
linear forms, i.e. the elements of degree $1$.
If $X$ has degree $d$, then its Chow divisor is cut out by a single form 
$\text{Ch}(X)$ of degree $d$ 
unique up to nonzero scale,
called the {\it Chow form,} in the
coordinate ring of the Grassmannian $\Sym(\wedge^{c} W^*)/ I_{\textup{Gr}(c,n+1)}$.
As the parameters $n, c, d$ increase, Chow forms become unwieldy
to even store on a computer file.  Arguably, the most efficient (and useful) representations 
of Chow forms are as determinants or Pfaffians of a matrix with entries in $\wedge^{c} W^*$.  
As we explain next, Ulrich sheaves can give such formulas.

\subsection{Construction of Chow forms}
We now explain how to obtain the Chow form $\textup{Ch}(X)$ of a variety $X$
from an Ulrich sheaf $\cF$ whose support is $X$. The reference
for this is \cite[p.552-553]{ESW}. Let
$M = \oplus_{d \in  \ZZ} H^{0}(\PP^n, \cF(d))$ be the graded module
of twisted global sections over the polynomial ring $A = \kr[x_0, \ldots, x_n]$. 
We write
$W^*$ for the vector space generated by the variables $x_0, \ldots, x_n$.
Consider the minimal free resolution (\ref{eq:UlrichLinres}) of $M$. The
map $d_i$ may be represented by a matrix $D_i$ of size $\beta_i\times
\beta_{i+1}$, with entries in the linear space $W^*$. Since
(\ref{eq:UlrichLinres}) is a complex the product of two successive matrices
$D_{i-1} D_i$ is the zero matrix. Note that when we multiply the entries of these
matrices, we are multiplying elements in the ring $A = \Sym(W^*) = 
\kr[x_0, \ldots, x_n]$.  

Now comes the shift of view: Let $B = \oplus_{i = 0}^n
\wedge^i W^*$ be the exterior algebra on the vector space $W^*$. 
We now consider the entries in the $D_i$ (which are all degree 
one forms in $A_1 = W^* = B_1$) 
to be in the ring $B$ instead.
We then multiply together all the matrices $D_i$ corresponding to 
the maps $d_i$. The multiplications of the entries are performed in the skew-commutative ring
$B$. We then get a product:
\[ D = D_0\cdot D_1 \cdots D_{c-1}, \]
where $c$ is the codimension of the variety $X$ which supports $\cF$. 
If $\cF$ has rank $r$ and the degree of $X$ is $d$, the matrix
$D$ is a nonzero $rd \times rd$ matrix.
The entries in the product $D$ now lie in $\wedge^c W^*$. 
Now comes the second shift of
view: We consider the entries of $D$ to be linear forms in the polynomial ring
$\Sym(\wedge^c W^*)$. Then we take the determinant
of $D$, computed in this polynomial ring, and get a form of degree $rd$ in $\Sym(\wedge^c W^*)$. 
When considered in the coordinate ring
of the Grassmannian $\Sym(\wedge^c W^*)/I_G$, then $\det(D)$ equals the $r^{\text{th}}$ power of the
Chow form of $X$.  
For more information on the fascinating links between the 
symmetric and exterior algebras, the reader can start with
the Bernstein-Gel'fand-Gel'fand correspondence as treated in
\cite{EFS}.

\subsection{Skew-symmetry of the matrices computing the 
Chow form of $PX^s_{4,2}$}
In \S \ref{sec:Ulrich} we constructed two different Ulrich modules of rank 
$2$ on
the variety $PX^s_{4,2}$ of symmetric $4\times 4$ matrices of rank $\leq 2$. 
That variety has degree $10$. The matrix $D$ thus in both cases
is $20 \times 20$, and its determinant
is a square in $\Sym(\wedge^c W^*)$. 
In fact, and here our analysis of the equivariant resolutions pays off,
the matrix $D$ in both cases is skew-symmetric when we use 
the bases distinguished by representation theory for the differential matrices:

\begin{lemma} Let $A,B,C$ be matrices of linear forms in the exterior
algebra. Their products behave as follows under transposition:
\begin{enumerate}
\item $(A \cdot B)^T = -B^T \cdot A^T$
\item $(A \cdot B \cdot C)^T = -C^T \cdot B^T \cdot A^T$.
\end{enumerate}
\end{lemma}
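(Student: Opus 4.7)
The plan is to reduce both assertions to the anticommutation law in the exterior algebra $B=\bigoplus_i \wedge^i W^*$: two elements of pure degrees $p$ and $q$ satisfy $xy = (-1)^{pq}\,yx$. In particular any two linear forms $x,y\in W^*$ anticommute, while a degree $2$ element commutes with a degree $1$ element. Everything else is entrywise bookkeeping.

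For part (1) I would just compute entrywise. We have
\[
\bigl((A\cdot B)^T\bigr)_{ij} \;=\; (A\cdot B)_{ji} \;=\; \sum_k A_{jk} B_{ki},
\qquad
(B^T\cdot A^T)_{ij} \;=\; \sum_k B_{ki} A_{jk}.
\]
Since each $A_{jk}$ and each $B_{ki}$ lies in $W^*$, they anticommute term by term, and so the two right-hand sides differ by an overall minus sign. This gives $(A\cdot B)^T = -B^T\cdot A^T$.

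For part (2) I would not redo the entrywise calculation from scratch but rather iterate (1), while tracking how the degrees of the entries evolve. The entries of $A\cdot B$ live in $\wedge^2 W^*$, and those of $C$ in $W^*$, so the same entrywise computation as in (1), with the anticommutation relation replaced by the Koszul commutation $yx = (-1)^{1\cdot 2}xy = xy$, shows that transposing a ``degree-$2$ times degree-$1$'' product picks up \emph{no} extra sign:
\[
\bigl((A\cdot B)\cdot C\bigr)^T \;=\; C^T \cdot (A\cdot B)^T.
\]
Substituting part (1) for $(A\cdot B)^T$ on the right then yields $(A\cdot B\cdot C)^T = -C^T\cdot B^T\cdot A^T$.

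The only pitfall to watch is the tempting but wrong reasoning ``each transpose flips a sign, so the two flips cancel.'' My plan avoids this precisely by tracking the Koszul degrees: by the time one applies the second transpose, one of the two factors being commuted past the other has degree $2$, so the sign is $(-1)^{1\cdot 2}=+1$, and only the single minus from part (1) survives. This is the main bit one has to be careful about; beyond that, both identities follow directly from the anticommutation of linear forms in the exterior algebra.
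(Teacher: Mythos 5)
Your proof is correct and rests on the same underlying fact as the paper's, namely the anticommutation $uv=-vu$ of linear forms in the exterior algebra; the paper treats part (2) by directly noting $uvw=-wvu$ for linear forms, whereas you derive it by iterating part (1) and tracking Koszul degrees, which is a minor stylistic variant of the same elementary argument.
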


\begin{proof} Part (1) is because $uv = -vu$ when 
$u$ and $v$ are linear forms in the exterior algebra.
Part (2) is because $uvw = -wvu$ for linear forms in the exterior algebra.
\end{proof}

The resolutions (\ref{eq:UlrichRes}) and (\ref{eq:UlrichRes2}) of our
two Ulrich sheaves, have the form:
\begin{equation} \label{eq:FGGF} 
F \vmto{\alpha} G \vmto{\phi} G^* \vmto{\beta} F^*.
\end{equation}
Dualizing and twisting we get the resolution:
\[ F \vmto{\ \beta^T} G \vmto{\ \phi^T} G^* \vmto{\ \alpha^T} F^*. \]
Since $\phi = \phi^T$, both $\beta$ and $\alpha^T$ map isomorphically onto the same image.
We can therefore replace the map $\beta$ in (\ref{eq:FGGF})  with $\alpha^T$, and get the $\textup{GL}(E)$-equivariant resolution:
\[ F \vmto{\alpha} G \vmto{\phi} G^* \vmto{\alpha^T} F^*. \]
Let $\oval, \ovphi$ and $\ovalt$ be the maps in the resolution
above, but now considered to live over the exterior algebra. 
The Chow form associated to the two Ulrich sheaves is then the 
Pfaffian of the matrix:
\[ \oval \, \ovphi \, \ovalt.\]

\begin{prop}\label{SkewSym} The Chow form $\textup{Ch}(PX^s_{4,2})$ constructed from the Ulrich sheaf is, 
in each case, the Pfaffian of a $20 \times 20$ skew-symmetric
matrix.
\end{prop}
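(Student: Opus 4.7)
The plan is to exploit the identities in the preceding Lemma together with the symmetry of $\phi$ established in Propositions \ref{symM} and \ref{symN}. The setup is already in place: by replacing $\beta$ with $\alpha^{T}$ we have written the resolution in the form
\[ F \vmto{\alpha} G \vmto{\phi} G^{*} \vmto{\alpha^{T}} F^{*}, \]
and the Chow form (to the power $r=2$) equals the determinant of the $20 \times 20$ matrix $D = \oval \cdot \ovphi \cdot \ovalt$ computed in the exterior algebra $B = \bigoplus_{i} \wedge^{i} W^{*}$. So the task reduces to showing that $D$ is skew-symmetric.

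The first step is to observe that, although the entries of $\ovphi$ are interpreted in $B$, the symmetry $\phi = \phi^{T}$ proved at the level of the polynomial ring passes to $B$ without modification: each entry of $\ovphi$ is a single element of $B_{1} = W^{*}$, and transposing a $1 \times 1$ block is trivial. Hence $\ovphi^{T} = \ovphi$ as matrices over $B$. Similarly $(\ovalt)^{T} = \oval$ by the usual double-transpose identity, since the entries of $\alpha$ and $\alpha^{T}$ are again individual linear forms in $W^{*}$.

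The second step is to apply part (2) of the Lemma to $D$, which gives
\[ D^{T} \, = \, (\oval \cdot \ovphi \cdot \ovalt)^{T} \, = \, -\,(\ovalt)^{T} \cdot \ovphi^{T} \cdot \oval^{T} \, = \, -\,\oval \cdot \ovphi \cdot \ovalt \, = \, -D, \]
where in the middle equality we use the two identities from the first step. Thus $D$ is a $20 \times 20$ skew-symmetric matrix over the exterior algebra, hence (after the second shift of view described in \S\ref{sec:Chow}) a skew-symmetric matrix of linear forms in $\Sym(\wedge^{3}W^{*})$.

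To conclude, recall that for any $2n \times 2n$ skew-symmetric matrix $D$ one has $\det(D) = \textup{Pf}(D)^{2}$. Combined with the Eisenbud-Schreyer formula $\det(D) = \textup{Ch}(PX^{s}_{4,2})^{2}$ arising from the rank $2$ Ulrich sheaf, this forces $\textup{Ch}(PX^{s}_{4,2}) = \pm \textup{Pf}(D)$, as claimed. The only real pitfall is the careful sign-tracking for the transpose of a triple product in $B$, but this is exactly what the Lemma handles, so no substantial obstacle remains.
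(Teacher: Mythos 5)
Your argument is correct and follows exactly the paper's proof: both apply part (2) of the Lemma to the triple product $\oval\,\ovphi\,\ovalt$, use $\ovphi^{T}=\ovphi$ (from Propositions \ref{symM} and \ref{symN}) and $(\ovalt)^{T}=\oval$, and then pass from $\det(D)=\textup{Ch}^{2}$ to the Pfaffian. You have merely spelled out the steps that the paper compresses into a single displayed line.
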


\begin{proof} The Chow form squared is the determinant of $\oval \, \ovphi \, \ovalt$
and we have: 

\vspace{0.2cm}

\hspace{3.5cm}
$ \big{(}\oval \, \ovphi \, \ovalt\big{)}^T = - \, (\ovalt)^T \, \ovphi^T \, \ovalt
= - \, \oval \, \ovphi \, \ovalt.$
\end{proof}

\subsection{Explicit matrices computing the Chow form of $PX^s_{4,2}$}

Even though our primary
aim is to compute the Chow form of the essential variety,
we get explicit matrix formulas for the Chow form of $PX^s_{4,2}$
as a by-product of our method.
We carried out the computation in Proposition \ref{SkewSym} in \texttt{Macaulay2} for both Ulrich
modules on $PX^s_{4,2}$.  We used the package \texttt{PieriMaps} to make matrices $D_1$ and $D_2$ 
representing $\alpha$ and $\phi$ with respect to the built-in choice of bases
parametrized by semistandard tableaux.  We had to multiply $D_2$ on the right by a
change of basis matrix to get a matrix representative with respect to dual bases, 
i.e. symmetric.  For example in the case of the first Ulrich module (\ref{eq:UlrichRes})
this change of basis matrix
computes the perfect pairing $S_{3,2}(E) \te S_{3,3,1}(E) \to (\wedge^4 E)^{\te 3}$. 
Let us describe the transposed inverse matrix that represents the dual 
pairing.  Columns are labeled by the
semistandard Young tableaux $S$ of shape $(3,2)$, and rows are labeled
by the semistandard Young tableaux $T$ of shape $(3,3,1)$.
The $(S,T)$-entry in the matrix is obtained by fitting together the tableau $S$ and the tableau $T$ rotated by $180^{\circ}$
into a tableau of shape $(3,3,3,3)$, straightening, and then taking the coefficient of
{\tiny \tableau[scY]{ 0 & 0 & 0 \\ 1 & 1 & 1 \\ 2 & 2 & 2 \\ 3 & 3 & 3}}.
To finish for each Ulrich module, we took the product $D_{1}D_{2}D_{1}^{T}$ over the exterior algebra. 

The two resulting explicit $20 \times 20$ skew-symmetric matrices are available as \texttt{arXiv}
ancillary files or at this paper's
webpage\footnote{\url{http://math.berkeley.edu/~jkileel/ChowFormulas.html}}.
Their Pfaffians equal the Chow form of $PX^s_{4,2}$, which is an element in the
homogeneous coordinate of the $\textup{Gr}(3,10) = \textup{Gr}(\PP^2, \PP^9)$.  To get a feel for the 
`size' of this Chow form, note that this ring is a quotient of the
polynomial ring $\Sym(\wedge^{3}\Sym_{2}(E))$ in 120 Pl\"ucker variables, denoted 
$\Q[p_{\{11,12,13\}}, \ldots, p_{\{33,34,44\}}]$ on our website, by the ideal minimally generated
by 2310 Pl\"ucker quadrics.  We can compute that the degree 10 piece where $\text{Ch}(PX^s_{4,2})$ lives is a 
108,284,013,552-dimensional vector space.  

Both $20 \times 20$ matrices afford extremely compact formulas for this special element.  
Their entries are
linear forms in $p_{\{11,12,13\}}, \ldots, p_{\{33,34,44\}}$ with one- and two-digit relatively prime
integer coefficients.  No more than $5$ of the $p$-variables appear in any entry.  In the first matrix, $96$
off-diagonal entries equal 0.  The matrices give new expressions for one of the two irreducible factors of a 
discriminant studied since 1879 by George Salmon (\cite{Sal}) and as recently as 2011 (\cite{PSV}), as we see next in Remark \ref{rem:salmon}.

\begin{remark}\label{rem:salmon}
From the subject of plane curves, it is classical that every ternary quartic form $f \in \C[x,y,z]_{4}$ 
can be written as $f = \textup{det}(xA + yB + zC)$ for some $4 \times 4$ symmetric matrices 
$A, B, C$.  Geometrically, this expresses $\textup{V}(f)$ inside the 
net of plane quadrics $\langle A, B, C \rangle$ as the locus of singular quadrics.  
By Theorem \textup{7.5} of \textup{\cite{PSV}}, that plane quartic curve $\textup{V}(f)$ is singular if and only if the 
\textup{Vinnikov discriminant}: 
\[ \Delta(A, B, C) = \textup{\textbf{M}}(A,B,C)\textup{\textbf{P}}(A,B,C)^{2} \]
evaluates to 0.  Here $\textup{\textbf{M}}$ is a degree $(16,16,16)$ polynomial 
known as the \textup{tact invariant}
and $\textup{\textbf{P}}$ is a degree $(10,10,10)$ polynomial.
The factor $\textup{\textbf{P}}$ equals the Chow form $\textup{Ch}(PX^s_{4,2})$ 
after substituting Pl\"ucker coordinates for Stiefel coordinates:
\[ p_{\{i_{1}j_{1},i_{2}j_{2},i_{3}j_{3}\}} = 
\textup{det} \begin{pmatrix}
a_{i_{1}j_{1}} & a_{i_{2}j_{2}} & a_{i_{3}j_{3}} \\
b_{i_{1}j_{1}} & b_{i_{2}j_{2}} & b_{i_{3}j_{3}} \\
c_{i_{1}j_{1}} & c_{i_{2}j_{2}} & c_{i_{3}j_{3}} 
\end{pmatrix}.\]
\end{remark}

\subsection{Explicit matrices computing the Chow form of $\mathcal{E_{\C}}$}

We now can put everything together and solve the problem raised 
by Agarwal, Lee, Sturmfels and Thomas in \cite{ALST} of computing the Chow form of the essential variety.
In Proposition \ref{prop:sm}, we constructed a linear embedding $s \colon \PP^{8} \hookrightarrow \PP^{9}$ that restricts
to an embedding $\mathcal{E}_{\C} \hookrightarrow PX^s_{4,2}$.  Both of our Ulrich sheaves supported on
$PX^s_{4,2}$ pull back to Ulrich sheaves supported on $\mathcal{E}_{\C}$, and their minimal free resolutions 
pull back to minimal free resolutions:

\[ s^*F \,\, \xleftarrow{\,\,\,\, s^*\alpha \,\,\,\,} \,\, s^*G \,\, \xleftarrow{\,\,\,\, s^*\phi \,\,\,\,} \,\, s^*G^* \,\, \xleftarrow{\,\,\,\, s^*\alpha^t \,\,\,\,} \,\, s^*F^*. \]

\noindent Here we verified in \texttt{Macaulay2} that $s^*$ quotients by a linear form that is a nonzero 
divisor for the two Ulrich modules.  So, to get the Chow form $\textup{Ch}(\mathcal{E}_{\C})$ from 
Propositions \ref{pro:UlrichFirst} and \ref{pro:UlrichSecond}, we took matrices $D_{1}$ and $D_{2}$ symmetrized from above, and applied $s^*$.
That amounts to substituting $x_{ij} = s(M)_{ij}$, where $s(M)$ is from \S \ref{subsec:coord}.  We then multiplied $D_{1}D_{2}D_{1}^T$, which 
is a product of a $20\times60$, a $60\times60$ and a $60\times20$ matrix, over the 
exterior algebra.

The two resulting explicit $20 \times 20$ skew-symmetric matrices are available 
at the paper's webpage.  
Their Pfaffians equal the 
Chow form of $\mathcal{E}_{\C}$, which is an element in the
homogeneous coordinate of $\textup{Gr}(\PP^2, \PP^8)$.  We denote that ring
as the polynomial ring in 84 (dual) Pl\"ucker variables 
$\Q[q_{\{11,12,13\}}, \ldots, q_{\{31,32,33\}}]$ 
modulo 1050 Pl\"ucker quadrics.  Here $\textup{Ch}(\mathcal{E}_{\C})$
lives in the 9,386,849,472-dimensional subspace of degree 10 elements.

Both matrices are excellent
representations of $\textup{Ch}(\mathcal{E}_{\C})$.  
Their entries are linear forms in $q_{\{11,12,13\}}, \ldots, q_{\{31,32,33\}}$ with relatively prime integer coefficients
less than 216 in absolute value.  In the first matrix, 96 off-diagonal entries 
vanish, and no entries have full support.

Bringing this back to computer vision, we can now prove our main result stated in \S \ref{sec:intro}:

\begin{proof}[Proof of Theorem \textup{\ref{mainThm}}]
Given $\{(x^{(i)},y^{(i)})\}$.  Let us first assume that we have a solution $A, B, \widetilde{X^{(1)}}, \ldots, \widetilde{X^{(6)}}$ to the system (\ref{3d}).
Note that the group:
$$G := \{ g \in \textup{GL}(4, \C) \, | \, (g_{ij})_{1 \leq i,j \leq 3} \in \textup{SO}(3, \C) \textup{ and } g_{41} = g_{42} = g_{43} = 0 \} $$
equals the stabilizer of the set of calibrated camera matrices inside $\C^{3 \times 4}$, with respect to right multiplication.
We~now~make~two~simplifying~assumptions~about~our~solution~to~$(\ref{3d})$.

\begin{itemize}

\item Without loss of generality, $A = [\, \textup{id}_{3 \times 3} \, | \, 0 \,]$.  For otherwise, select $g \in G$ so that
$Ag = [\, \textup{id}_{3 \times 3} \, | \, 0 \,]$, and then $Ag, Bg, g^{-1}\widetilde{X^{(1)}}, \ldots, g^{-1}\widetilde{X^{(6)}}$ is also a solution to $(\ref{3d})$.

\item Denoting $B = [\, R \, | \, t \,]$ for $R \in \textup{SO}(3, \C)$ and $t \in \C^{3}$, then without loss of generality, $t \neq 0$.  For otherwise, we may zero out the last coordinate of each $\widetilde{X^{(i)}}$ and replace
$B$ by $[\, R \, | \, t' \,]$ for any $t' \in \C^{3}$, and then we still have a solution to the system $(\ref{3d})$.  

\end{itemize}

Denote $[\, t \,]_{\times} := \begin{pmatrix} 0 & t_{3} & -t_{2} \\ -t_{3} & 0 & t_{1} \\ t_{2} & -t_{1} & 0 \end{pmatrix}$. Set $M = [ \, t \,]_{\times}R$.  Then $M \in \mathcal{E}_{\C}$.  
The following computation gives the basic link with $\textup{Ch}(\mathcal{E}_{\C})$:

\begin{align*}
 \begin{pmatrix} y^{(i)}_{1} & y^{(i)}_{2} & 1 \end{pmatrix} \,\, M \,\, \begin{pmatrix} x^{(i)}_{1} \\[3pt] x^{(i)}_{2} \\[3pt] 1 \end{pmatrix} \,\! &\equiv {(B \widetilde{X^{(i)}})}^{T}M \, (A\widetilde{X^{(i)}}) \\
 & = \widetilde{X^{(i)}}^{T} \Big{(} [ \, R \, | \, t \,]^{T}  \, [\, t \, ]_{\times} \, R \,\, [\, \textup{id}_{3 \times 3} \, | \, 0 \, ] \Big{)} \, \widetilde{X^{(i)}} \\
 & = \widetilde{X^{(i)}}^{T} \Big{(} [ \, R \, | \, 0 \,]^{T}  \, [\, t \, ]_{\times} \, [R \, | \, 0 \, ] \Big{)} \, \widetilde{X^{(i)}} \\
 & = 0.
\end{align*}

Here the second-to-last equality is because $t^{T} \, [\, t \, ]_{\times} = 0$, and the last equality is because the matrix in parentheses is skew-symmetric.
In particular, this calculation shows that $M \in \mathcal{E}_{\C}$ satisfies six linear constraints.  Explicitly, these are:

\[\begin{pmatrix}
\\[-10pt]
y^{(1)}_{1}x^{(1)}_{1} & y^{(1)}_{1}x^{(1)}_{2} & y^{(1)}_{1} &
y^{(1)}_{2}x^{(1)}_{1} & y^{(1)}_{2}x^{(1)}_{2} & y^{(1)}_{2} & 
x^{(1)}_{1} & x^{(1)}_{2} & 1\\[8pt]
y^{(2)}_{1}x^{(2)}_{1} & y^{(2)}_{1}x^{(2)}_{2} & y^{(2)}_{1} &
y^{(2)}_{2}x^{(2)}_{1} & y^{(2)}_{2}x^{(2)}_{2} & y^{(2)}_{2} & 
x^{(2)}_{1} & x^{(2)}_{2} & 1\\[8pt]
y^{(3)}_{1}x^{(3)}_{1} & y^{(3)}_{1}x^{(3)}_{2} & y^{(3)}_{1} &
y^{(3)}_{2}x^{(3)}_{1} & y^{(3)}_{2}x^{(3)}_{2} & y^{(3)}_{2} & 
x^{(3)}_{1} & x^{(3)}_{2} & 1\\[8pt]
y^{(4)}_{1}x^{(4)}_{1} & y^{(4)}_{1}x^{(4)}_{2} & y^{(4)}_{1} &
y^{(4)}_{2}x^{(4)}_{1} & y^{(4)}_{2}x^{(4)}_{2} & y^{(4)}_{2} & 
x^{(4)}_{1} & x^{(4)}_{2} & 1\\[8pt]
y^{(5)}_{1}x^{(5)}_{1} & y^{(5)}_{1}x^{(5)}_{2} & y^{(5)}_{1} &
y^{(5)}_{2}x^{(5)}_{1} & y^{(5)}_{2}x^{(5)}_{2} & y^{(5)}_{2} & 
x^{(5)}_{1} & x^{(5)}_{2} & 1\\[8pt]
y^{(6)}_{1}x^{(6)}_{1} & y^{(6)}_{1}x^{(6)}_{2} & y^{(6)}_{1} &
y^{(6)}_{2}x^{(6)}_{1} & y^{(6)}_{2}x^{(6)}_{2} & y^{(6)}_{2} & 
x^{(6)}_{1} & x^{(6)}_{2} & 1 \\[5pt]
\end{pmatrix}
\, \begin{pmatrix}
m_{11} \\ m_{12} \\ m_{13} \\ 
m_{21} \\ m_{22} \\ m_{23} \\
m_{31} \\ m_{32} \\ m_{33} \\[1pt]
\end{pmatrix}
\,\,\, = \,\,\, 0.
\]

Let the above $6 \times 9$ matrix be denoted $Z$. 
We consider two cases.

\begin{itemize}

\item \underline{Case $1$: \textit{$Z$ is full rank.}}  Then $\textup{ker}(Z)$ determines a $\PP^{2}$ in $\PP^{8}$.
This $\PP^{2}$ meets $\mathcal{E}_{C}$, 
namely at $M$.  So, $\text{Ch}(\mathcal{E}_{\C})$ 
evaluates to 0 there.  By \cite[p.94]{GKZ}, we can compute
the Pl\"ucker coordinates of this projective plane from the maximal minors of $Z$.

\item \underline{Case $2$: \textit{$Z$ is not full rank.}}  Then all maximal minors of $Z$ are 0.

\end{itemize}

Thus, to get $\mathcal{M}(x^{(i)},y^{(i)})$ as in Theorem \ref{mainThm}, 
we take either of the $20 \times 20$ skew-symmetric 
matrix formulas for $\textup{Ch}(\mathcal{E}_{\C})$ described above, and we replace each $q_{ijk}$ by 
the determinant of $Z$ with columns $i, j$ and $k$ removed.
In Case 1, this $\mathcal{M}(x^{(i)},y^{(i)})$ drops rank, by the definition of
Chow forms.  In Case 2, this $\mathcal{M}(x^{(i)},y^{(i)})$ evaluates to the zero matrix.
We have proven that this $\mathcal{M}(x^{(i)},y^{(i)})$ satisfies the first property 
stated in Theorem \ref{mainThm}.

We now prove that this $\mathcal{M}(x^{(i)},y^{(i)})$ satisfies the converse property in Theorem \ref{mainThm}.
Factor $M = U\,\text{diag}(1, 1, 0)\,V^{T}$ 
with $U, V \in \text{SO}(3, \C)$.  This is possible for a Zariski open 
subset of $M \in \mathcal{E}_{\C}$. For the dense subset in Theorem \ref{mainThm}, we take those $\{ (x^{(i)}, y^{(i)}) \}$ for which
there is $M$ in the above Zariski open subset such that 
$ \widetilde{y^{(i)}}^{T}M\,\widetilde{x^{(i)}} = 0 $. This is a dense open subset in all pairs $\{ (x^{(i)}, y^{(i)}) \}$
such that $\mathcal{M}(x^{(i)},y^{(i)})$ is rank deficient.
 Denote 
$W = \begin{pmatrix} 0 & -1 & 0 \\ 1 & 0 & 0 \\ 0 & 0 & 1 \end{pmatrix}$.
Now set 
$A = \begin{pmatrix} I \!\!\!\!& | &\!\!\!\! 0 \end{pmatrix}$ and 
$B =  \begin{pmatrix} \, UWV^{T} \!\!& | &\!\! U{\begin{pmatrix} 0 & 0 & 1\end{pmatrix}}^{T}  \end{pmatrix}$.
Now $\widetilde{X^{(i)}}$ are uniquely determined (see \cite[9.6.2]{HZ}).
\end{proof}

We illustrate the main theorem with two examples.  
Note that since the first example is a `positive', 
it is a strong (and reassuring)
check of correctness for our formulas.

\begin{example}\label{ex:positive}
Consider the image data of 6 point correspondences 
$\{(x^{(i)},y^{(i)}) \in \RR^2\times\RR^2 \, | \, i=1,\ldots, m\}$
given by the corresponding rows of the
two matrices:
\[
[\,x^{(i)}\,] \,=\, \begin{pmatrix}
0 & 0 \\ 1 & -1 \\[1pt] 0 & -\frac{1}{2} \\[2pt] -3 & 0 \\[2pt] \frac{3}{2} & -\frac{5}{2} \\[3pt] 1 & \frac{1}{7}
\end{pmatrix}
\hspace{2cm}
[\,y^{(i)}\,] \,=\, \begin{pmatrix}
\frac{8}{11} & \frac{16}{11} \\[4pt] \frac{7}{22} & \frac{5}{22} \\[4pt] \frac{8}{29} & \frac{34}{29} \\[4pt] \frac{17}{20} & -1 \\[4pt] \frac{1}{7} & \frac{1}{7} \\[4pt] \frac{9}{4} & \frac{3}{4}
\end{pmatrix}.
\]
In this example, they do come from world points $X^{(i)} \in \RR^3$ and calibrated cameras $A, B$:
\[
\big{[}\,X^{(i)}\,\big{]} \,=\, \begin{pmatrix}
0 & 0 & 2 \\ 1 & -1 & 1 \\ 0 & -2 & 4 \\ 3 & 0 & -1 \\ 3 & -5 & 2 \\ 7 & 1 & 7
\end{pmatrix},
\hspace{1cm}
A \,=\, \begin{pmatrix}
1 & 0 & 0 & 0 \\ 0 & 1 & 0 & 0 \\ 0 & 0 & 1 & 0
\end{pmatrix},
\hspace{1cm}
B \,=\, \begin{pmatrix}
\frac{7}{9} & \frac{4}{9} & \frac{4}{9} & 0 \\[4pt] -\frac{4}{9} & -\frac{1}{9} & \frac{8}{9} & 0 \\[4pt] \frac{4}{9} & -\frac{8}{9} & \frac{1}{9} & 1
\end{pmatrix}.
\]
To detect this, we form the $6 \times 9$ matrix $Z$ from the proof of Theorem \textup{\ref{mainThm}}:
\[
Z \,=\, \begin{pmatrix}
0 & 0 & \frac{8}{11} & 0 & 0 & \frac{16}{11} & 0 & 0 & 1 \\[4pt]  
\frac{7}{22} & -\frac{7}{22} & \frac{7}{22} & \frac{5}{22} & -\frac{5}{22} & \frac{5}{22} & 1 & -1 & 1 \\[4pt]  
0 & -\frac{4}{29} & \frac{8}{29} & 0 & -\frac{17}{29} & \frac{34}{29} & 0 & -\frac{1}{2} & 1 \\[4pt]  
-\frac{51}{20} & 0 & \frac{17}{20} & 3 & 0 & -1 & -3 & 0 & 1 \\[4pt]  
\frac{3}{14} & -\frac{5}{14} & \frac{1}{7} & \frac{3}{14} & -\frac{5}{14} & \frac{1}{7} & \frac{3}{2} & -\frac{5}{2} & 1 \\[4pt]  
\frac{9}{4} & \frac{9}{28} & \frac{9}{4} & \frac{3}{4} & \frac{3}{28} & \frac{3}{4} & 1 & \frac{1}{7} & 1
\end{pmatrix}.
\]
We substitute the maximal minors of $Z$ into the matrices computing 
$\textup{Ch}(\mathcal{E}_{\C})$ in \textup{\texttt{Macaulay2}}.  The determinant command 
then outputs 0.  This computation recovers the fact that
the point correspondences are images 
of 6 world points under a pair of calibrated cameras.

\end{example}

\begin{example}\label{ex:negative}
Random data 
$\{(x^{(i)},y^{(i)}) \in \RR^2\times\RR^2 \, | \, i=1,\ldots, 6\}$
is expected to land outside the Chow divisor of $\mathcal{E}_{\C}$.
We made an instance using the \textup{\texttt{random(QQ)}} 
command in \textup{\texttt{Macaulay2}} for each coordinate of 
image point.  The coordinates ranged 
from $\frac{1}{8}$ to 5 in absolute value.
We carried out the substitution from Example \textup{\ref{ex:positive}},
and got two full-rank skew-symmetric matrices with Pfaffians 
$\approx 5.5 \times 10^{25}$ and $\approx 1.3 \times 10^{22}$, respectively.
These matrices certified that the system \textup{(\ref{3d})} 
admits no solutions
for that random input.

\end{example}

The following proposition is based on general properties of Chow forms, 
collectively known as the U-resultant method to
solve zero-dimensional polynomial systems.
In our situation, it gives a connection with the `five-point algorithm' 
for computing essential matrices.  The
proposition is computationally inefficient as-is for that purpose, but
see \cite{MKF} for a more efficient algorithm that would exploit 
our matrix formulas for $\text{Ch}(\mathcal{E}_{\C})$.  
Implementing
the algorithms in \cite{MKF} for our matrices is one avenue for future work.

\begin{prop}\label{recoverEssentials}
Given a generic 5-tuple 
$\{(x^{(i)},y^{(i)}) \in \RR^2\times\RR^2 \, | \, i=1,\ldots, 5\}$,
if we make the substitution from the proof of 
Theorem \textup{\ref{mainThm}}, then the Chow form $\textup{Ch}(\mathcal{E}_{\C})$
specializes to a polynomial in 
$\RR[x^{(6)}_{1}, x^{(6)}_{2}, y^{(6)}_{1}, y^{(6)}_{2}]$.
Over $\C$, this specialization completely splits as:
\[
\prod_{i=1}^{10} \small{\begin{pmatrix} y^{(6)}_{1} & y^{(6)}_{2} & 1 \end{pmatrix} \,\, M^{(i)} \,\, \begin{pmatrix} x^{(6)}_{1} \\[3pt] x^{(6)}_{2} \\[3pt] 1 \end{pmatrix}.}
\]
Here $M^{(1)}, \ldots, M^{(10)} \in \mathcal{E}_{\C}$ are the essential
matrices determined by the given five-tuple.
\end{prop}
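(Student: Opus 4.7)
The plan is to apply the classical U-resultant philosophy to the Chow form just constructed. First, fix the five generic correspondences $(x^{(i)}, y^{(i)})$ for $i = 1, \ldots, 5$ and let $L \subset \PP^{8}$ be the linear subspace cut out by the five bilinear epipolar constraints $\widetilde{y^{(i)}}^{T} M \, \widetilde{x^{(i)}} = 0$ on the entries of $M$. For generic input these five linear equations are independent, so $L \cong \PP^{3}$, and since $\mathcal{E}_{\C}$ has codimension $3$ and degree $10$ in $\PP^{8}$, B\'ezout guarantees a length-$10$ intersection $L \cap \mathcal{E}_{\C}$. Generically this intersection is reduced, and its points are precisely the ten essential matrices $M^{(1)}, \ldots, M^{(10)}$ determined by the five point pairs.

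Next I would let the sixth correspondence play the role of an indeterminate. Its bilinear constraint cuts $L$ further down to a $\PP^{2}$ whose Pl\"ucker coordinates are the maximal minors of the $6 \times 9$ matrix $Z$ constructed in the proof of Theorem \ref{mainThm}. A $3 \times 3$ minor of $Z$ that avoids the sixth row is a constant; a minor that uses the sixth row is linear in it, with each entry of that row of bidegree at most $(1,1)$ in $(x^{(6)}, y^{(6)})$. Thus every Pl\"ucker coordinate of this $\PP^{2}$ is a polynomial in $(x^{(6)}, y^{(6)})$ of bidegree $\le (1,1)$. Since $\textup{Ch}(\mathcal{E}_{\C})$ is homogeneous of degree $10$ in the Pl\"ucker coordinates, the specialization lands in $\RR[x^{(6)}_{1}, x^{(6)}_{2}, y^{(6)}_{1}, y^{(6)}_{2}]$ and has bidegree at most $(10,10)$.

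By the defining property of Chow forms, the specialization vanishes at $(x^{(6)}, y^{(6)})$ exactly when the associated $\PP^{2}$ meets $\mathcal{E}_{\C}$; because this $\PP^{2}$ lives inside $L$, that is the same as passing through one of the $M^{(k)}$, i.e.\ the vanishing of the epipolar value $\widetilde{y^{(6)}}^{T} M^{(k)} \widetilde{x^{(6)}}$ for some $k$. For generic input each $M^{(k)}$ has rank $\ge 2$, so each of these bilinear forms is irreducible, and the ten forms are pairwise non-proportional. The Nullstellensatz therefore forces the specialization to be divisible by the product $\prod_{k=1}^{10} \widetilde{y^{(6)}}^{T} M^{(k)} \widetilde{x^{(6)}}$. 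That product already has bidegree exactly $(10,10)$, matching the upper bound on the specialization, so the two polynomials agree up to a nonzero scalar --- and this is the best one can expect, since $\textup{Ch}(\mathcal{E}_{\C})$ is itself defined only up to nonzero scale.

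I expect the main obstacle to be not any single step but the cumulative genericity bookkeeping needed to justify them: that the five epipolar forms are independent on $\PP^{8}$, that $L \cap \mathcal{E}_{\C}$ is reduced with ten rank-$\ge 2$ points, and that the ten resulting bilinear forms on the sixth correspondence are pairwise non-proportional. All of these are open conditions on the five-tuple, so each can be handled in principle by exhibiting a single explicit example and invoking the irreducibility of $\mathcal{E}_{\C}$, but one should verify that the open subsets one carves out are genuinely non-empty, which may require a small explicit computation with a known five-point configuration.
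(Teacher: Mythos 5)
Your argument is correct and follows essentially the same route as the paper's three-sentence proof: observe that each factor's vanishing forces the specialized Chow form to vanish, invoke the Nullstellensatz (together with the distinctness and irreducibility of the ten bilinear factors) to obtain divisibility, and conclude by matching degrees. One small slip: the Pl\"ucker coordinates substituted into $\textup{Ch}(\mathcal{E}_{\C})$ are the $6\times 6$ (not $3\times 3$) maximal minors of $Z$, each of which necessarily uses the sixth row, but your bidegree-$(1,1)$ conclusion for each coordinate, and hence the bound of $(10,10)$ on the specialization, is still correct.
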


\begin{proof}
By the proof of Theorem \ref{mainThm},
any zero of the above product is a zero of the 
specialization
of $\textup{Ch}(\mathcal{E}_{\C})$.
By Hilbert's Nullstellensatz, this
implies that the product divides 
the specialization.
But both polynomials
are inhomogeneous 
of degree 20, so they are $\equiv$.
\end{proof}

\subsection{Numerical experiments with noisy point correspondences}

In this final subsection, we discuss
how our Theorem \ref{mainThm} 
is actually resistant to a common complication
in concrete applications of algebra: noisy data.
Indeed, on real image data, correctly matched point pairs 
will only come to the computer vision practitioner with finite accuracy.
In other words, they differ from exact 
correspondences by some noise.  

\begin{question}
While in Theorem \textup{\ref{mainThm}} the matrix $\mathcal{M}(x,y)$
drops rank when there is an exact solution to \textup{(\ref{3d})},
how can we tell if there is an approximate solution?
\end{question}

The answer is to calculate the Singular Value Decomposition of the matrices
$\mathcal{M}(x,y)$ from Theorem \ref{mainThm}, when a noisy six-tuple of 
image point correspondences is plugged in.
Since Singular Value Decomposition is numerically stable \cite[\S 5.2]{Dem},
we expect approximately rank-deficient SVD's when there exists 
an approximate solution to ($\ref{3d}$).  To summarize,
since we have \textbf{matrix} formulas, we can look at \textbf{spectral gaps}
in the presence of noise.

We offer experimental evidence that this works.
For our experiments, we assumed uniform noise from
$\textup{unif}\, [-10^{-r}, \, 10^{-r}]$; 
this arises in image processing from
pixelation \cite[\S 4.5]{Bov}.
For each $r=1, \, 1.5, \, 2, \, \ldots, \,15$, we 
executed five hundred of the following trials: 

\begin{itemize}
\item \textit{Pseudo-randomly generate an exact six-tuple
of image point correspondences 
\[\{(x^{(i)},y^{(i)}) \in \Q^2\times\Q^2 \, | \, i=1,\ldots, 6\}\]
with coordinates of size $O(1)$.}
\item \textit{Corrupt each image coordinate in the six-tuple by 
adding an independent and identically distributed
sample from $\textup{unif}\, [-10^{-r}, \, 10^{-r}]$.}
\item \textit{Compute the SVD's of both $20 \times 20$ matrices $\mathcal{M}(x,y)$, derived from the
first and second Ulrich sheaf respectively, with the above noisy image coordinates plugged in.}
\end{itemize}

\begin{figure}
\centering
\begin{tikzpicture} \begin{axis} [
xlabel = {Accuracy of point correspondences},
ylabel = {Average signal from SVD},
grid = major,
legend style={at={(0.07,0.9)},anchor=north west,font=\tiny},
legend entries = {matrix from first sheaf, matrix from second sheaf},
]
\addplot coordinates {
    (1,1.16028287852412)
(1.5,1.3916948800606)
(2,1.82692894733035)
(2.5,2.21894855673212)
(3,2.72710939053883)
(3.5,3.22762523555921)
(4,3.69784088706456)
(4.5,4.29547440071266)
(5,4.73575492783225) 
(5.5,5.21591529521078)
(6,5.7183196871869)
(6.5,6.17193012807508)
(7,6.65064456129844)
(7.5,7.23623193146843)
(8,7.71318514498591)
(8.5,8.19410157974031)
(9,8.73029922757801)
(9.5,9.20085675008298)
(10,9.67129198485323)
(10.5,10.2286257334291)
(11,10.7767927717677)
(11.5,11.2497846723665)
(12,11.7195473226768)
(12.5,12.2295210295655)
(13,12.8087154722628)
(13.5,13.2412886250093)
(14,13.6174894124508)
(14.5,13.9045453015374)
(15,14.0279980412893)
};
\addplot coordinates {};
\addplot coordinates {};
\addplot coordinates {};
\addplot coordinates {};
\addplot coordinates {};
\addplot coordinates {};
\addplot coordinates {};
\addplot coordinates {};
\addplot coordinates {
(1,.836434244010548)
(1.5,1.11094064328223)
(2,1.57122967278171)
(2.5,1.97200044908366)
(3,2.4833637394284)
(3.5,2.99507985859513)
(4,3.48566006144779)
(4.5,4.06883705473918)
(5,4.55564229441522)
(5.5,4.99792616504173)
(6,5.46238193854507)
(6.5,5.96490204951661)
(7,6.44087593126678)
(7.5,6.97719036750883)
(8,7.47416539835522)
(8.5,7.94542041770554)
(9,8.50609424431121)
(9.5,8.91615424828249)
(10,9.49354884739002)
(10.5,9.99418295365698)
(11,10.5285534281416)
(11.5,10.99816462228)
(12,11.4507989884858)
(12.5,11.9895041433225)
(13,12.5624250663382)
(13.5,12.9737765718665)
(14,13.3292359125474)
(14.5,13.6558268669346)
(15,13.7760581282421)
};
\end{axis}
\end{tikzpicture}
\captionsetup{labelformat=empty}
\caption{\hspace{1.5cm} FIGURE. Both matrices satisfying Theorem \ref{mainThm}}  
\vspace{-0.35cm}
\caption{\hspace{1.5cm} detect approximately consistent point pairs.} \label{fig:exps}
\end{figure}

These experiments were performed
in \texttt{Macaulay2} using double precision
for all floating-point arithmetic.
Since it is a little subtle, we elaborate on our algorithm to
pseudo-randomly generate
exact correspondences in the first 
bullet.  It breaks into three steps:

\begin{enumerate}
\item Generate calibrated 
cameras $A, B \in \Q^{3 \times 4}$. 
To do this, we sample twice from the Haar measure on 
$\text{SO}(3,\RR)$ and sample twice from the 
uniform measure 
on the radius 2 ball centered at the origin 
in $\RR^{3}$.  Then we concatenate nearby 
points in
$\text{SO}(3,\Q)$ and $\Q^{3}$ to obtain $A$ and $B$.
To find the nearby rotations, 
we pullback under $\RR^{3} \mto{} S^{3} \backslash \{N\} \mto{} \text{SO}(3, \RR)$, 
we take nearby points in $\Q^3$, and then we pushforward.

\item Generate world points 
$X^{(i)} \in \Q^{3} \,\, (i=1, \ldots, 6)$.
To do this, we sample six times from the uniform measure
on the radius 6 ball centered at the origin in 
$\RR^{3}$ (a choice
fitting with some real-world data)
and then we replace those by nearby
points in $\Q^{3}$.

\item  Set
$\widetilde{x^{(i)}} \equiv A\widetilde{X^{(i)}}$ and
$\widetilde{\, y^{(i)}} \equiv B\widetilde{X^{(i)}}$.
\end{enumerate}

The most striking takeaway of our experiments
is stated in the following result 
concerning the bottom spectral gaps we observed.
Bear in mind that since
$\mathcal{M}(x,y)$ is skew-symmetric, 
its singular values occur with multiplicity 
two, so $\sigma_{19}(\mathcal{M}(x,y)) = \sigma_{20}(\mathcal{M}(x,y))$.

\begin{empirical}\label{empirical}
In the experiments described above, we observed for both matrices:
$$\frac {\sigma_{18}(\mathcal{M}(x,y))}{\sigma_{20}(\mathcal{M}(x,y))} = O(10^{r}).$$
Here $\mathcal{M}(x,y)$ has r-noisy image coordinates, and
$\sigma_{i}$ denotes the $i^{\textup{th}}$ largest singular value.
\end{empirical}

\noindent  The figure above plots
$\log_{10} \begin{pmatrix} \frac{\sigma_{18}(\mathcal{M}(x,y))}{\sigma_{20}(\mathcal{M}(x,y))} \end{pmatrix}$ averaged over the five hundred trials against 
$r$.

\bigskip

{\footnotesize

}

\bigskip \medskip 

\footnotesize 

\noindent \textbf{Authors' addresses:}

\noindent
Gunnar Fl{\o}ystad, Universitetet i Bergen, Norway,
\texttt{gunnar@mi.uib.no}

\noindent Joe Kileel, University of California, Berkeley, USA,
\texttt{jkileel@math.berkeley.edu}

\noindent Giorgio Ottaviani, Universit{\`a} di Firenze, Italy,
\texttt{ottavian@math.unifi.it}


\begin{thebibliography}{10}



\bibitem{ALST} S.~Agarwal, H.-L.~Lee, B.~Sturmfels, R.~Thomas,
Certifying the Existence of Epipolar Matrices,  
{\em Int. J. Comput. Vision}, to appear, \arxiv{1510.01401v1}.

\bibitem{ASSSS} S.~Agarwal, N.~Snavely, I.~Simon, S.M.~Seitz, R.~Szeliski,
Building Rome in a day,
{\em Proc. Int. Conf. on Comput. Vision} (2009), 72--79.

\bibitem{AFO} M.~Aprodu, G.~Farkas, A.~Ortega,
Minimal resolutions, Chow forms and Ulrich bundles on K3 surfaces,
{\em J. Reine Angew. Math.}, to appear, \arxiv{1212.6248v4}. 

\bibitem{BCR} J.~Bochnak, M.~Coste and M.-F.~Roy, 
{\em Real Algebraic Geometry}, 
A Series of Modern Surveys in Mathematics \textbf{36}, 
Springer-Verlag, Berlin, 1998.

\bibitem{BeLec} C.~Carr\'e,  B.~Leclerc,
Splitting the Square of a Schur Function into its Symmetric
and Antisymmetric Parts, 
{\em J. Algebraic Combin.} \textbf{4} (1995), no. 3, 
201--231. 

\bibitem{Bov} A.~Bovik,
{\em Handbook of Image and Video Processing},
2nd ed., Academic Press, San Diego, 2005.

\bibitem{DHOST}
J.~Draisma, E.~Horobe\c{t}, G.~Ottaviani,  B.~Sturmfels, R.~Thomas,
The Euclidean Distance Degree of an Algebraic Variety,   
{\em Found. Comput. Math.} \textbf{16} (2016), no. 1, 99--149, \arxiv{1309.0049v3}.

\bibitem{Dem} J.~Demmel,
{\em Applied Numerical Linear Algebra},
Society for Industrial and Applied Mathematics,
Philadelphia, 1997.

\bibitem{DLT} D.~Drusvyatski, H.-L.~Lee, R.~Thomas,
Counting Real Critical Points of the Distance to Orthogonally Invariant Matrix Sets, 
{\em SIAM J. Matrix Anal. Appl.} \textbf{36} (2015), no. 3, 1360--1380, \arxiv{1502.02074v2}.

\bibitem{DLOT} D.~Drusvyatski, H.-L.~Lee, G.~Ottaviani, R.~Thomas,
The Euclidean Distance Degree of Orthogonally
Invariant Matrix Varieties, \textit{Israel J. Math.},
to appear, \arxiv{1601.07210v1}.

\bibitem{EFS} D.~Eisenbud, G.~Fl{\o}ystad, F.~Schreyer,
Sheaf cohomology and free resolutions over exterior algebras, 
{\em Trans. Amer. Math. Soc.} \textbf{355} (2003), no. 11, 4397--4426, \arxiv{math/0104203v2}.

\bibitem{ESW} D.~Eisenbud, F.~Schreyer, J.~Weyman,
Resultants and Chow forms via exterior syzygies, 
{\em J. Amer. Math. Soc.} \textbf{16} (2003), no. 3, 537--579, \arxiv{math/0111040v1}.

\bibitem{FH} W.~Fulton, J.~Harris, 
{\em Representation Theory: A First Course}, 
Graduate Texts in Mathematics \textbf{129}, Springer-Verlag, New York, 1991.

\bibitem{FM} O.D.~Faugeras, S.~Maybank,
Motion from point matches: Multiplicity of solutions,
{\em Int. J. Comput. Vision} \textbf{4} (1990), no. 3, 225--246.

\bibitem{FB} M.~Fischler, R.~Bolles, 
Random Sample Consensus: a Paradigm for Model Fitting with Application to 
Image Analysis and Automated Cartography, 
{\em Commun. Assoc. Comp. Mach.} \textbf{24} (1981) 381--395.

\bibitem{Ga} F.~Galetto, 
Free resolutions and modules with a semisimple Lie group action, 
{\em J. Softw. Algebra Geom.}  \textbf{7} (2015), 17--29.

\bibitem{GKZ} I.M.~Gelfand, M.M.~Kapranov, and A.V.~Zelevinsky,
{\em Discriminants, Resultants and Multidimensional Determinants}, 
Mathematics: Theory \& Applications, Birkh\"auser, Boston, 1994.

\bibitem{GS} D.~Grayson, M.~Stillman,
{\em Macaulay2, a software system for research in algebraic geometry}. 
Available at  \url{http://www.math.uiuc.edu/Macaulay2/}.

\bibitem{HZ} R.I.~Hartley, A.~Zisserman,
{\em Multiple View Geometry in Computer Vision},  
2nd ed., Cambridge University Press, Cambridge, 2004.

\bibitem{JKSW} M.~Joswig, J.~Kileel, B.~Sturmfels, A.~Wagner,
Rigid Multiview Varieties, 
{\em Int. J. Algebra Comput.} \textbf{26} (2016), 775--778, \arxiv{1509.03257v2}.

\bibitem{Mar} M.~Marshall,
{\em Positive Polynomials and Sums of Squares},
Mathematical Surveys and Monographs \textbf{146},
American Mathematical Society, Providence, 2008.

\bibitem{Maybank} S. Maybank, {\em Theory of Reconstruction from Image Motion}, Springer, Berlin, 1993. 

\bibitem{MKF} H.~Murao, H.~Kobayashi, T.~Fujise,
On Factorizing the Symbolic U-resultant -- Application of the \textit{ddet} operator, 
{\em J. Symbolic Comput.} \textbf{15} (1993), no. 2, 123--142.

\bibitem{Nis} D.~Nist\'{e}r,
An efficient solution to the five-point relative pose problem,
{\em IEEE Trans. Pattern Anal. Mach. Intell.} \textbf{26} (2004), no. 6, 756--770.

\bibitem{PSV} D.~Plaumann, B.~Sturmfels, C.~Vinzant,
Quartic Curves and Their Bitangents,
{\em J. Symbolic Comput.} \textbf{46} (2011), no. 6, 712--733, \arxiv{1008.4104v2}.


\bibitem{Sal} G.~Salmon,
{\em A Treatise on the Higher Plane Curves: Intended as 
a Sequel to ``A Treatise on Conic Sections"}, 3rd ed., Dublin, 1879; 
reprinted by Chelsea Publ. Co., New York, 1960.

\bibitem{Sam} S.~Sam,
Computing inclusions of Schur modules, 
{\em J. Softw. Algebra Geom.} \textbf{1} (2009), 5--10, \arxiv{0810.4666v2}.


\bibitem{SW} S.~Sam, J.~Weyman, 
Pieri resolutions for classical groups,
{\em J. Algebra} \textbf{329} (2011), no. 1, 222--259, \arxiv{0907.4505v5}.

\bibitem{SSW} S.~Sam, A.~Snowden, J.~Weyman,
{Homology of Littlewood complexes},
{\em Selecta Mathematica}
\textbf{19} (2013), no. 3, 655--698, \arxiv{1209.3509v2}.


\bibitem{MaSchur} M.~Stillman, H.~Schenck, C.~Raicu,
{\em SchurRings, a package for Macaulay2}.  Available at
\url{http://www.math.uiuc.edu/Macaulay2/doc/Macaulay2-1.8.2/share/doc/Macaulay2/SchurRings/html/}.


\bibitem{Wey} J.~Weyman,
{\em Cohomology of Vector Bundles and
Syzygies}, Cambridge Tracts in Mathematics \textbf{149},
Cambridge University Press, Cambridge, 2003.

\end{thebibliography}
\end{document}